\numberwithin{equation}{section}
\newtheorem{remark}{Remark}[section]
\newtheorem{theorem}{Theorem}[section]
\newtheorem{corollary}{Corollary}[section]
\newtheorem{definition}{Definition}[section]
\newtheorem{prop}{Proposition}[section]
\begin{document}

\begin{center}
{\LARGE \bf
Asymptotic Approximation for the Solution
\\[2mm]
 to a Semi-linear Parabolic Problem
\\[5mm]
in a Thick Fractal Junction
}
\end{center}
\vskip50pt
\begin{center}
{\Large \bf  Taras A. Mel'nyk}
\end{center}
\vskip20pt
\begin{center}\vskip-25pt
{\footnotesize
Department of Mathematical Physics,
\\
 Faculty of Mechanics and Mathematics,
 \\
 Taras Shevchenko National University of Kyiv ,\\
Volodymyrska st., 64/13\\
Kyiv 01601, Ukraine,\\
{\tt melnyk@imath.kiev.ua}}
\end{center}
\vskip20pt

\begin{abstract}
We consider a semi-linear parabolic problem in a model plane thick fractal junction $\Omega_{\varepsilon}$, which
is the union of a domain $\Omega_{0}$ and a lot of joined thin trees situated $\varepsilon$-periodically  along  some interval  on the boundary of
$\Omega_{0}.$ The trees have finite number of branching levels. The following nonlinear Robin boundary condition
$\partial_{\nu}v_{\varepsilon} +  \varepsilon^{\alpha_i} \kappa_i(v_{\varepsilon}) = \varepsilon^{\beta_i} g^{(i)}_{\varepsilon}$
is given on the boundaries of the branches from the $i$-th branching layer; $\alpha_i$ and $\beta_i$ are real parameters.
The asymptotic analysis of this problem is made as $\varepsilon\to0,$ i.e., when the number of the thin trees infinitely increases and their thickness vanishes. In particular, the corresponding homogenized problem is found  and
the existence and uniqueness of its solution in an anizotropic Sobolev space of multi-sheeted functions is proved.
We construct the asymptotic approximation for the solution $v_\varepsilon$  and prove
the corresponding asymptotic estimate in the space
$C\big([0,T]; L^2(\Omega_\varepsilon) \big) \cap L^2\big(0, T; H^1(\Omega_\varepsilon)\big)$, which shows the influence of  the parameters $\{\alpha_i\}$ and $\{\beta_i\}$  on the asymptotic behavior of the solution.
\end{abstract}

\bigskip

{\bf Comments:}\  29 pages, 4 figures

\bigskip

{\bf Subj-class:} \ Analysis of PDEs

\bigskip

{\bf MSC-classification:} \ 35B27, 74K30, 35B40, 35K57, 49J27

\bigskip

{\bf Keywords:} \ asymptotic approximation,  reaction-difusion equation, thick fractal junction.

\newpage

\tableofcontents
\newpage

\pagestyle{myheadings}
\markboth{}{{\underline{T.A. Mel'nyk, \hfill{}  Asymptotic approximation for the solution}}}

\section{Introduction}\label{Sec1}

In recent years, materials with complex structure are widely used in  engineering devices in many fields of science.
It is known that some properties of materials are controlled by their geometrical structure. Therefore, the study of the influence of the material microstructure can improve its useful properties  and reduce undesirable effects.
The main methods for this study are asymptotic methods for boundary value problems (BVP's) in domains with complex structure: perforated domains, grid-domains, domains with rapidly oscillating boundaries, thick junctions, etc.

In this paper, we begin to study asymptotic properties of solutions to BVP's in thick junctions of a new type, namely {\it thick fractal junctions}.
A  thick fractal junction is  the  union of some domain, which is called the junction's body,
and a lot of joined thin trees situated $\varepsilon$-periodically  along  some manifold on the boundary of the junction's body.
The trees have finite number of branching levels. The small parameter $\varepsilon $ characterizes the distance between neighboring thin branches and also their thickness. On Fig.~\ref{f1} you can see a heat radiator with a fractal-structure that has one branching level.

\begin{figure}[htbp]
\begin{center}
\includegraphics[width=6cm]{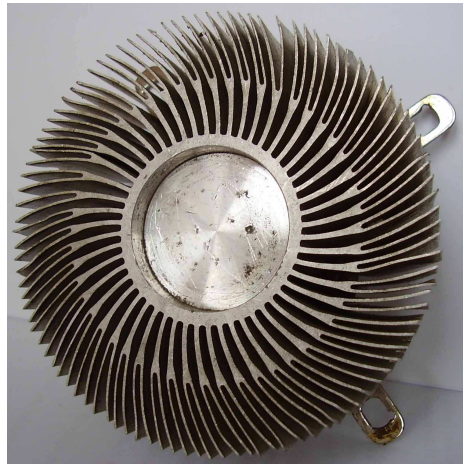}
\end{center}
\caption{Heat radiator shaped like a thick fractal junction}\label{f1}
\end{figure}

Various constructions of thick junction type are successfully used in nanotechnologies \cite{Lenczner}, microtechnique \cite{Lyshevshi}, modern engineering constructions (microstrip radiator, ferrite-filled rod radiator), as well as many physical and biological systems.
 For example, a number of new applications are envisioned, especially regarding efficient sensors (inertial, biological, chemical), signal processing filters (ultra large band), micro-fractal constructions:  fractal antennas, fractal transistors, fractal heat radiators and so on.

Such successful applications of thick-junction constructions have stimulated active learning BVP's in thick junctions with more complex structures:  thick junctions with the thin junction's body \cite{B-G_2003,B-G-Mos_2007,B-G-M},
thick multi-level junctions \cite{M-U-T,Mel-Durante-2012,Mel-Sad-2013}, thick cascade junctions \cite{Mel-Chech-2014,Mel-Che-2008}, where new qualitative results were obtained. Specifically, it was shown that processes in thick multi-level junctions behave as a “many-phase system” and
thick cascade junctions have new kind of eigenvibrations. This means that materials with such micro-structures have some new properties.

Designing such arrays of mechanical components in thick junctions cannot be achieved with today softwares, because this would require too much CPU resources. Regarding their number of components (in some cases few thousands), development of new mathematical tools are necessary. One of them is asymptotic analysis of BVP's in thick junctions as $\varepsilon \to 0 ,$ i.e., when the number of attached thin domains infinitely increases and their thickness decreases to zero. Asymptotic results give us the possibility to replace the original problem in a thick junction by the corresponding homogenized problem that is more simpler and then apply computer simulation. In addition, in some cases it is possible to construct accurate and numerically implementable asymptotic approximations.

As a first step, here we consider a nonlinear boundary-value problem  for a reaction-diffusion equation
in a model $2D$ thick fractal junction $\Omega_\varepsilon$ (see Fig~\ref{f2}).
Of course, it is possible to consider a thick fractal junction that has more complex branching structures.
However, the main features in the asymptotic behavior of solutions to BVP's in thick fractal junctions can be observed
on the example of  $\Omega_\varepsilon$ (a thick fractal junction with two branching levels).

The rest of this paper is organized as follows. 

 The statement of the problem and features of the investigation are given in  Section~\ref{sec1}.

 In Section~\ref{Sec2} we formally construct the leading terms of asymptotic expansions for a solution to our problem.
The asymptotics consists of the outer expansions both in the junction's body and in each thin branches as well as the leading terms of inner expansions in a neighborhood both of the joint zone and each branching levels.

Then in Section~\ref{Sec3}, using the method of matched asymptotic expansions, we derive the corresponding nonstandard homogenized problem. The existence and uniqueness of its solution in an anizotropic Sobolev space of multi-sheeted functions is proved in Section~\ref{hom-problem}.

In Section~\ref{approx} we construct an approximating function, find its residuals, estimate them and prove
the main asymptotic estimate for the difference between the solution and the approximating function.

\section{Statement of the problem}\label{sec1}

Let $\Omega_0$ be a bounded domain  in  $\Bbb R^2$ with the Lipschitz boundary $\partial\Omega_0$ and $\Omega_0\subset \{
x:=(x_1, x_2)  \in \Bbb R^2 : \ x_2 > 0\}.$ Let $\partial\Omega_0$ contain the segment
$I_0=\{x: x_1\in [0,a], \ \ x_2=0\}.$ We also assume that there exists a positive
 number $\delta_0$ such that $\Omega_0 \cap \{x: \  0 < x_2 < \delta_0\}= \{x: \ x_1\in (0, a), \ x_2 \in (0, \delta_0)\}.$

Let $a, l_1, l_2, l_3$ be positive numbers, $h_0, h_{1,1}, h_{1,2}, h_{2,1}, h_{2,2}, h_{2,3}, h_{2,4} $ be  fixed numbers from the interval $(0,1)$ and $ h_{1,1} + h_{1,2} < h_0,$  $h_{2,1} + h_{2,2} < h_{1,1},$ $h_{2,3} + h_{2,4} < h_{1,2}.$ Let us also introduce  a small parameter $\varepsilon= \frac{a}{N},$ where $N$ is  a large positive integer.

\begin{figure}[htbp] 
\begin{center}
\includegraphics[width=8cm]{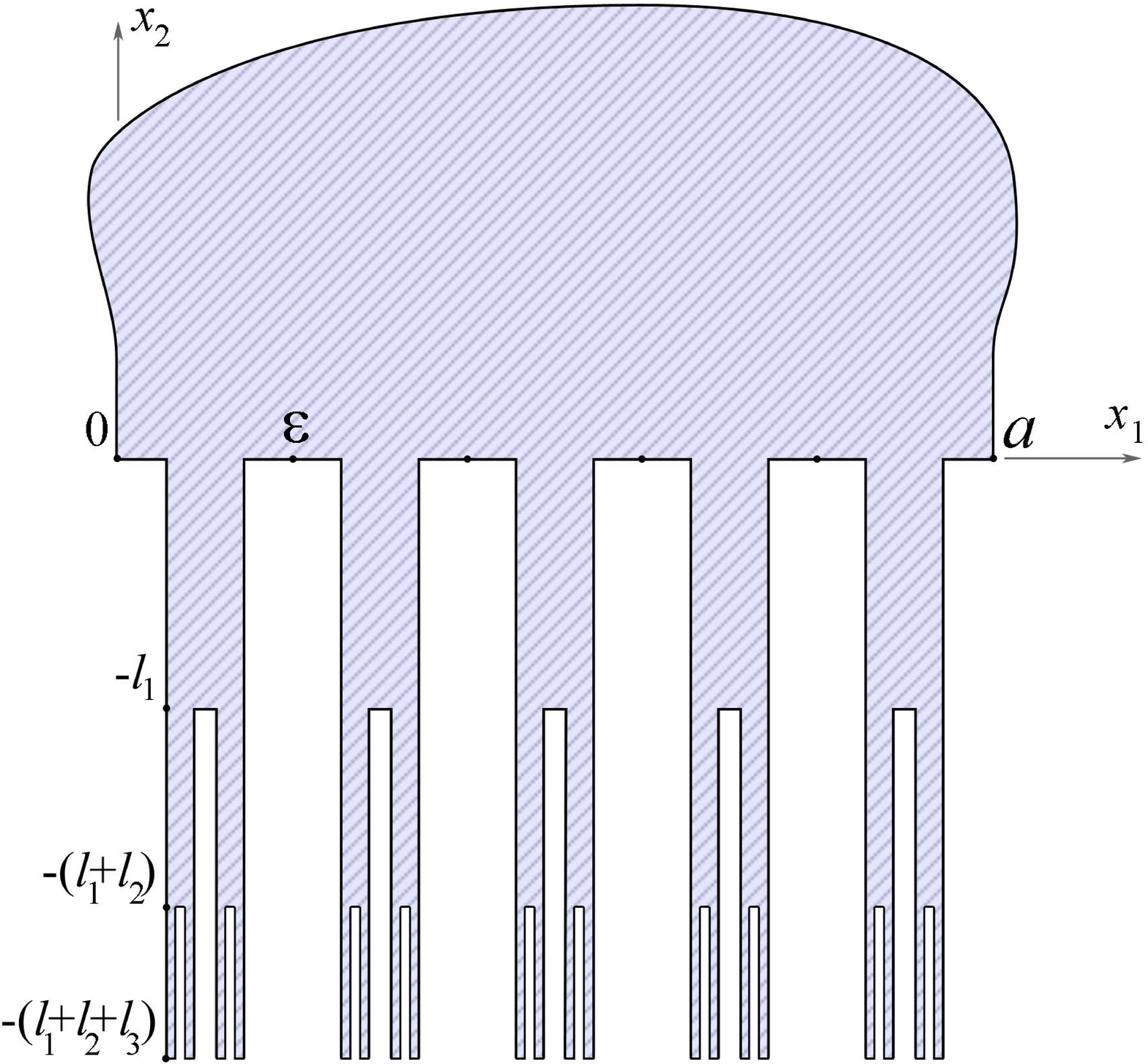}
\end{center}
\caption{A model  thick fractal junction $\Omega_{\varepsilon}$}\label{f2}
\end{figure}

A model thick fractal junction $\Omega_{\varepsilon}$ (see Fig.~\ref{f2}) consists of the junction's body $\Omega_{0},$
\begin{itemize}
\item
a large number of the thin rods $G^{(0)}_\varepsilon = \bigcup_{j=1}^{N-1} G^{(0)}_{j}(\varepsilon),$
$$
G^{(0)}_{j}(\varepsilon)= \left\{ x: \, \left| x_{1} - \varepsilon (j+\frac{1}{2})\right|<\frac{\varepsilon h_0}{2}, \quad x_{2}  \in (-l_1, 0]
\right\},
$$
from the zero layer,
  \item
  a large number of the thin rods $G^{(1,m)}_\varepsilon = \bigcup_{j=1}^{N-1} G^{(1,m)}_{j}(\varepsilon),$
$$
G^{(1,m)}_{j}(\varepsilon)= \left\{ x: \, \left| x_{1} - \varepsilon (j+ b_{1,m})\right|<\frac{\varepsilon h_{1,m}}{2}, \quad x_{2}  \in \big(- l_2 - l_1 , -l_1\big]
\right\},
$$
 from the first branching layer, where $m\in \{1, 2\}$ and
\begin{equation}\label{b1}
    b_{1,1} = \frac{1 - h_0 +h_{1,1}}{2} ,  \quad b_{1,2} = \frac{1 + h_0 - h_{1,2}}{2},
\end{equation}
  \item
and a large number of the thin rods $G^{(2,m)}_\varepsilon = \bigcup_{j=1}^{N-1} G^{(2,m)}_{j}(\varepsilon),$
$$
G^{(2,m)}_{j}(\varepsilon)= \left\{ x: \, \left| x_{1} - \varepsilon (j+ b_{2,m})\right|<\frac{\varepsilon h_{2,m}}{2},
\ \  x_{2}  \in \big(- l_3 - l_2 - l_1, - l_2 - l_1\big] \right\},
$$
\end{itemize}
 from the second branching layer, where $m\in \{1, 2, 3, 4\}$ and
\begin{gather}\label{b21}
b_{2,1} = \frac{1 - h_0 + h_{2,1}}{2} , \quad b_{2,2} = \frac{1 - h_0 + 2 h_{1,1} - h_{2,2}}{2},
\\
b_{2,3} = \frac{1 + h_0 - 2 h_{1,2} + h_{2,3}}{2}, \quad b_{2,4} = \frac{1 + h_0 - h_{2,4}}{2}. \label{b22}
\end{gather}

Thus,
$
\Omega_{\varepsilon}=\Omega_{0}\bigcup G^{(0)}_{\varepsilon} \bigcup G^{(1)}_{\varepsilon}
\bigcup G^{(2)}_{\varepsilon},
$
where
$
G^{(1)}_{\varepsilon} =  \bigcup_{m=1}^2 G^{(1,m)}_{\varepsilon},
$
$
G^{(2)}_{\varepsilon}= \bigcup_{m=1}^4 G^{(2,m)}_{\varepsilon}.
$
The small parameter $\varepsilon $ characterizes the distance between neighboring thin branches and also
their thickness. Precisely, each branch $G^{(i,m)}_{j}(\varepsilon)$  has small cross-section of size ${\cal O}(\varepsilon)$
and constant height. In addition,  at fixed $j\in \{0,1,\ldots,N-1\}$ branches $G^{(0)}_{j}(\varepsilon),$ $\{G^{(1,m)}_{j}(\varepsilon)\}_{m=1}^2,$
$\{G^{(2,m)}_{j}(\varepsilon)\}_{m=1}^4$  form the tree  with two branching levels.
These trees are $\varepsilon$-periodically distributed along the segment~$I_0.$

In $\Omega_\varepsilon$ we consider  the following semilinear parabolic
 initial boundary-value problem:
\begin{equation}\label{start_prob}
\left\{
    \begin{array}{rcll}
    \partial_t v_{\varepsilon} - \Delta v_{\varepsilon} + k(v_{\varepsilon}) &=& f_0
                    &\mbox{in} \ \Omega_0 \times (0,T) ,
                    \\[2mm]
\partial_t v_{\varepsilon} - \Delta v_{\varepsilon} + k_i(v_{\varepsilon}) &=& 0
                    &\mbox{in} \ G^{(i)}_{\varepsilon} \times (0,T) , \quad i=0, 1, 2,
                    \\[2mm]
 \partial_{\nu} v_{\varepsilon} + \varepsilon^{\alpha_i} \kappa_i(v_{\varepsilon}) &=& \varepsilon^{\beta_i} g^{(i)}_{\varepsilon}
                    &\mbox{on} \ \Upsilon^{(i)}_{\varepsilon} \times (0,T), \quad i=0, 1, 2,
                    \\[2mm]
    \partial_{\nu} v_{\varepsilon} &=& 0
                    &\mbox{on} \ \partial\Omega_{\varepsilon}\setminus \Big( \bigcup_{i=0}^2 \Upsilon^{(i)}_{\varepsilon}\Big)  \times (0,T) ,
 \\[2mm]
\left[v_{\varepsilon} \right]_{\big|{x_2= - \sum_{n=0}^i l_n}} &= & \left[\partial_{x_2} v_{\varepsilon} \right]_{\big|{x_2= - \sum_{n=0}^i l_n}}
=0  &
\mbox{on} \ Q^{(i)}_{\varepsilon}\times (0,T), \ \ i=0, 1, 2,
    \\[4mm]
    \left. v_{\varepsilon} \right|_{t=0} &=&0 &\mbox{in} \   \Omega_{\varepsilon},
   \end{array}\right.
\end{equation}
where $\partial_{\nu}$ is the outward normal derivative;  for each index $i\in \{0, 1, 2\}$
parameters $\alpha_i$ and $\beta_i$ are greater or equal $1,$
$\Upsilon^{(i)}_{\varepsilon}= \bigcup_{m=1}^{2i} \Upsilon^{(i,m)}_{\varepsilon},$ $\Upsilon^{(i,m)}_{\varepsilon} $ is the union of vertical boundaries of the thin rods  $G^{(i,m)}_{\varepsilon},$
$Q^{(i)}_{\varepsilon} = G^{(i)}_{\varepsilon} \cap \{x_2= - \sum_{n=0}^i l_n\},$ $l_0=0,$
$f_0,$ $g^{(i)}_{\varepsilon},$ $k,$ $k_i,$ $\kappa_i$  are given functions;
\ the brackets denote the jump of the enclosed quantities.

\begin{remark}\label{remark_1}
Hereafter we use the following shortening: $\{x_2= - \sum_{n=0}^i l_n\} := \{x\in \Bbb R^2: \ x_2= - \sum_{n=0}^i l_n\};$
also if the index $i=0,$ then the index $m$ is absent and notation as $\Upsilon^{(0,m)}_{\varepsilon} $ means $\Upsilon^{(0)}_{\varepsilon}.$
\end{remark}

Assumptions for the given functions are as follows.
The function $f_0$ belongs to the space $L^2(\Omega_0 \times(0,T))$ and its support is compactly embedded in $\Omega_0$
for a.e.  $t\in (0, T).$
The functions  $\{g^{(i)}_\varepsilon\}_{i=0}^2$ satisfy the following conditions:
\begin{itemize}
\item
$g^{(i)}_\varepsilon\in L^2(D_i\times(0,T)),$
where the domain
\begin{equation}\label{1.1}
D_i=\Big\{x : \ 0<x_1< a, \ \    - \sum\nolimits_{n=0}^{i+1} l_n < x_2 < - \sum\nolimits_{n=0}^i l_n \Big\}
\end{equation}
is filled up by the thin rods from the $i$-th layer in the limit
passage as $\varepsilon\to 0;$
\item
 there exist weak derivatives $\partial_{x_1}g^{(i)}_{\varepsilon}\in L^2(D_i \times(0,T)), \ i=0, 1, 2,$ and
 constants $ c_i, \,\varepsilon_0$ such that for each value $\varepsilon \in (0,\varepsilon_0)$
\begin{equation}\label{1.2}
\|g^{(i)}_{\varepsilon}\|_{L^2(D_i \times(0,T))} \,+\,
\|\partial_{x_1}g^{(i)}_{\varepsilon}\|_{L^2(D_i \times(0,T))} \,\le\, c_i ;
\end{equation}
\item  moreover, if $\ \beta_i=1,$ then there exists a function $g^{(i)}_0 \in L^2(D_i\times(0,T))$ such that
\begin{equation}\label{1.3}
g^{(i)}_{\varepsilon} \to g^{(i)}_0 \ \ \mbox{in} \ \ L^2(D_i\times(0,T))
\quad \mbox{as} \ \ \varepsilon \to 0.
\end{equation}
\end{itemize}

The functions  $k:\mathbb{R} \to \mathbb{R},$ $k_i:\mathbb{R} \to \mathbb{R},$ and $\kappa_i:\mathbb{R} \to \mathbb{R},$
 $i=0, 1, 2$ are continuously differentiable and
\begin{gather} \label{cond-2}
\exists \,c_1, \, c_2>0 : \ \
c_1\le k' \le c_2, \quad c_1\le k'_i \le c_2,
\quad c_1\le \kappa'_i \le c_2 \ \ \
\text{ in} \ \mathbb{R},
\\
 i=0, 1, 2. \notag
\end{gather}
From (\ref{cond-2}) it follows (see e.g. \cite{Mel-m2as-08})  the following inequalities:
\begin{gather}\label{est-1}
c_1 s^2 + k(0) s \le k(s) \, s \le c_2 s^2 + k(0)s,
\\[2mm]
\label{est-2'}
\exists \,c_3>0  \ \ \forall \, p, \, s  \in \Bbb R : \quad |k(p) - k(s)| \le  c_3 |p - s|,
\quad  |k(s)| \le  c_3 (1 +  |s|)
\end{gather}
(the same inequalities for the other functions $\{k_i\}, \{\kappa_i\}).$

Recall that a function $v_{\varepsilon }\in L^{2}\bigl(0,T;\, H^{1}(\Omega _{\varepsilon })\bigr),$
with $v'_\varepsilon  \in L^2\bigl(0,T;( H^{1}(\Omega_\varepsilon))^*\bigr),$
is a weak solution to problem~(\ref{start_prob}) if
\begin{equation}\label{1.4}
\langle v'_\varepsilon, \psi   \rangle_\varepsilon
+ \langle {\cal A}_\varepsilon(t) v_{\varepsilon} , \psi \rangle_\varepsilon  = \langle F_\varepsilon(t) , \psi \rangle_\varepsilon
\end{equation}
for each $\psi \in H^1(\Omega_\varepsilon)$ and a.e. $t\in (0, T),$ and $v_\varepsilon|_{t=0}=0.$

Here $\partial_t v_\varepsilon:= v'_\varepsilon,$ the brackets $\langle \cdot , \cdot \rangle_\varepsilon$ denotes the pairing of $H^1(\Omega_\varepsilon)^*$ with $H^1(\Omega_\varepsilon),$
the operator ${\cal A}_\varepsilon(t):  H^1(\Omega_\varepsilon) \mapsto H^1(\Omega_\varepsilon)^*$ is defined by the formula
$$
\langle {\cal A}_\varepsilon(t) v , \psi  \rangle_\varepsilon :=
\int\limits_{\Omega _{\varepsilon }}\nabla_{x} v \cdot \nabla _{x} \psi \,dx + \int\limits_{\Omega_0} k(v) \, \psi \, dx +
\sum_{i=0}^2 \int\limits_{G^{(i)}_{\varepsilon}} k_i(v) \, \psi \, dx +
  \varepsilon^{\alpha_i} \int\limits_{\Upsilon_{\varepsilon }^{(i)}} \kappa_i(v) \, \psi \, dx_2
$$
for all $ v, \psi \in H^1(\Omega_\varepsilon),$ and the linear functional
$F_\varepsilon(t) \in H^1(\Omega_\varepsilon)^*$ is defined as follows:
$$
\langle F_\varepsilon(t) , \psi \rangle_\varepsilon :=
\int\limits_{\Omega _{0}}f_{0}\,\psi \, dx + \sum_{i=0}^2 \varepsilon ^{\beta_i}\int\limits_{\Upsilon_{\varepsilon }^{(i)}}g^{(i)}_{\varepsilon }\, \psi \, dx_2, \qquad \forall \psi \in H^1(\Omega_\varepsilon),
$$
for a.e. $t\in [0,T].$ In addition, it is known that $v_{\varepsilon } \in C([0,T]; L^2(\Omega_\varepsilon))$ and thus the equality $v_\varepsilon|_{t=0}=0$ makes sense.

Due to properties of the functions $k, k_i, \kappa_i, \ i=0, 1, 2,$ (see (\ref{cond-2})-(\ref{est-2'})) the operator ${\cal A}_\varepsilon$ is bounded,
strictly monotone, hemicontinuous, and coercive  (we verify these properties in more detail for the corresponding homogenized operator in Section \ref{hom-problem}). Then, from well-known results of the theory of monotone operators (see e.g. \cite{Showalter}) it follows that
for each fixed value $\varepsilon >0$ there exists a unique weak solution to problem (\ref{start_prob}).

\medskip

{\sf
Our main research efforts are oriented towards the analytical understanding and asymptotic approximation of phenomena and processes in physics and biology which take place in thick fractal junctions involving, as models, nonlinear boundary-value problem (\ref{start_prob}). In particular, we want  to find the corresponding homogenized problem as $\varepsilon \to 0,$  to construct the asymptotic approximation for the solution $v_\varepsilon$ and to study the influence of  the parameters $\{\alpha_i\}$ and $\{\beta_i\}$  on the asymptotic behavior of the solution.
}

\subsection{Features of the investigation}

\begin{enumerate}
  \item
Thick junctions have special character of the connectedness:
there are points in a thick junction, which are at a short distance of order ${\cal O}(\varepsilon),$ but the length
of all curves, which connect these points in the junction, is order ${\cal O}(1).$
As a result,  there are no extension operators that would be bounded uniformly in the corresponding Sobolev spaces \cite{ZAA99}.
At the same time the availability of an uniformly bounded family of extension operators is typical supposition in overwhelming majority of the existing homogenization schemes for problems in perforated domains with the Neumann or Robin boundary conditions
(see e.g. \cite{Ciorannescu,Conca}). In addition, thick junctions are non-convex domains with non-smooth boundaries. Therefore, solutions of boundary-value problems in such domains have only minimal $H^1$-smoothness, while (see e.g.~\cite{Conca}) the $H^2$-smoothness of a solution  is necessary to prove the convergence theorem. All these factors create special difficulties in the asymptotic analysis of BVP's in thick junctions.
  \item
In a typical interpretation the solution to problem (\ref{start_prob}) denotes the
density of some quantity (chemical concentration, temperature, electronic potential, etc) at equilibrium
within the thick fractal junction $\Omega_\varepsilon.$ Usually for applied problems, the source of the quantity is located in the junction’s body.
Therefore, the right-hand side $f_0$ is defined in $\Omega_0.$
  \item
Standard assumptions for nonlinear terms of reaction-diffusion equations are as follows:
they are Lipschtz continuous functions. This hypothesis in particular implies
$|k(s)| \le C (1 + |s|)$ for each $s \in \Bbb R$ and some constant $C.$ This is enough to state
that  problem~(\ref{start_prob}) has a unique solution. But, if we want to construct some approximation for a solution
and to prove the corresponding estimate, we need
some kind of a coercivity condition on the nonlinearity. Usually it reads as follows:
$k(s) s \ge C_1 |s|^2 - C_2$ for all $s \in \Bbb R$ and appropriate constants $C_1> 0,$
$C_2 \ge 0.$

Many physical processes, especially in chemistry and medicine, have monotonous nature. Therefore, it is naturally to
impose special monotonous conditions on the nonlinear terms. In our case we propose simple conditions (\ref{cond-2}) that imply
the coercivity conditions (\ref{est-1}).
  \item
Asymptotic behaviour of solutions to the reaction-diffusion equation in different kind of thin domains
with the uniform Neumann conditions was studied in \cite{ArCaPeSi,MarRyb}. The convergence theorems were proved under the following
assumptions for the nonlinear term:\\
in \cite{ArCaPeSi} it is a $C^2$-function with bounded derivatives and
\begin{equation}\label{disip-cond}
\limsup_{|s|\to+\infty} \frac{k(s)}{s} < 0;
\end{equation}
in \cite{MarRyb}  it is  a $C^1$-function, the dissipative condition (\ref{disip-cond}) holds and
\begin{equation}\label{deriv-cond}
|k'(s)| \le C (1 + |s|^{q-1}),
\end{equation}
where $q\in (1,+\infty)$.

Let us note that the convergence theorem for the solution to our problem~(\ref{start_prob})  can be proved under more weak assumptions for
 the functions $ k, \{k_i\}, \{\kappa_i\},$ namely they are vanish at zero and satisfy inequality (\ref{deriv-cond}).
  \item
 The nonlinear Robin boundary conditions are considered on the boundaries of the thin branches. These conditions mean that there is a flux of a  quantity through the surfaces of the branches. In fact very small activity holds always on the surface of
some material (therefore the Robin boundary conditions are more natural  for applied mathematical problems).
Such semilinear boundary conditions arise in many applied problems, in particular, in the modeling of chemical reactive flows. For instance, the following function
$$
\kappa(v) = \frac{\lambda\, v}{1 + \mu\, v} \quad \text{with} \quad \lambda, \mu > 0,
$$
which satisfies condition (\ref{cond-2}) if $f_0 \ge 0$ and $g_\varepsilon^{(i)} \equiv 0,$
corresponds to the Michaelis-Menten hypothesis in biochemical reactions
and to the Langmuir kinetics adsorption models (see \cite{Pao,Conca}).

\item
In the interpretation mentioned above, the problem (\ref{start_prob}) describes the motion of a reactive fluid having
different chemical features on different branching layers $(i=0, 1, 2)$ of the thick fractal junction.
To study the influence of the boundary interactions on the asymptotic behavior of the solution,
we introduce special intensity factors $\varepsilon^{\alpha_i}$ and $\varepsilon^{\beta_i}$ in
the Robin boundary conditions on the lateral sides of the thin rectangles from the $i$-th branching layers.

The effective behavior of this reactive flow  (as $\varepsilon \to 0)$ is described by a new nonstandard
homogenized parabolic problem containing extra zero-order terms which
catch the effect of the chemical reactions depending on $\alpha_i$ and $\beta_i.$
The asymptotic behavior of the solution is described in Theorem~\ref{th_1}. Here we note only that
 the following  differential equations
$$
h_{i,m} \partial_{t} v^{(i,m)}_0 -
h_{i,m} \partial^2_{x_2 x_2} v^{(i,m)}_0 + h_{i,m} k_i\big(v^{(i,m)}_0\big) +
 2 \delta_{\alpha_i,1} \kappa_i\big(v^{(i,m)}_0\big) =
 2 \delta_{\beta_i,1} g^{(i)}_0, \quad m = \overline{1, \, 2i},
$$
form the homogenized relations in  $D_i\times (0,T)$,  where
$\delta_{\alpha_i,1}, \delta_{\beta_i,1}$ are Kronecker's symbols.

\item
It should be stressed that the important problem for each new proposed asymptotic method is its
accuracy. Therefore, the proof of the error estimate for discrepancy between the constructed
approximation and the exact solution is general principle that has been applied to the
analysis of the efficiency of the proposed asymptotic method.
With the help of special branch-layer solutions and the method of matched asymptotic expansions, the approximation for the solution is constructed and the corresponding asymptotic error estimate in the space $C\big([0,T]; L^2(\Omega_\varepsilon) \big) \cap L^2\big(0, T; H^1(\Omega_\varepsilon)\big)$ is proved in Theorem~\ref{th_1}. From this theorem it follows directly the following corollary.
\begin{corollary}\label{corollary}
Let assumptions from Theorem~\ref{th_1} hold. Then for any $\rho\in(0,1)$
\begin{multline*}
\max_{t\in[0,T]}\Big( \|v_\varepsilon(\cdot,t) -
v^+_0(\cdot,t)\|_{L^2(\Omega_0)}
+ \sum_{i=0}^2 \sum_{m=1}^{2i}  \|v_\varepsilon(\cdot,t) -
v^{(i,m)}_0(\cdot,t)\|_{L^2(G^{(i,m)}_\varepsilon)}\Big)
\\
\le C_0 \Big(\varepsilon^{1-\rho}
 +  \sum_{i=0}^2\left(\varepsilon^{\alpha_i -1 + \delta_{\alpha_i, 1}} +
(1- \delta_{\beta_i, 1}) \varepsilon^{\beta_i - 1} + \delta_{\beta_i, 1}\|g^{(i)}_{\varepsilon } - g^{(i)}_{0}\|_{L^2(G^{(i)}_{\varepsilon})}
\right)\Big),
\end{multline*}
where $v_\varepsilon$ is the solution to problem {\rm(\ref{start_prob})}, $\Big(v^+, v^{(0)},  \big\{v^{(1,m)}\big\}_{m=1}^2, \, \big\{v^{(2,m)}\big\}_{m=1}^4\Big)$ is the multi-sheeted solution
to the homogenized problem~{\rm(\ref{CauchyProblem})}.
\end{corollary}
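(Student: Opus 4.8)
The plan is to obtain the corollary as an immediate consequence of the asymptotic estimate established in Theorem~\ref{th_1}, by comparing the exact solution not with the full approximation but only with its leading (homogenized) terms. Denote by $\mathcal{U}_\varepsilon$ the approximating function constructed in Section~\ref{approx}; Theorem~\ref{th_1} furnishes a bound of the form
\[
\|v_\varepsilon - \mathcal{U}_\varepsilon\|_{C([0,T];L^2(\Omega_\varepsilon))} + \|v_\varepsilon - \mathcal{U}_\varepsilon\|_{L^2(0,T;H^1(\Omega_\varepsilon))} \le C_0\Big(\varepsilon^{1-\rho} + \sum_{i=0}^2\big( \varepsilon^{\alpha_i-1+\delta_{\alpha_i,1}} + (1-\delta_{\beta_i,1})\varepsilon^{\beta_i-1} + \delta_{\beta_i,1}\|g^{(i)}_\varepsilon - g^{(i)}_0\|_{L^2(G^{(i)}_\varepsilon)}\big)\Big),
\]
in which the right-hand side is already the quantity displayed in the corollary. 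Since $\max_{t\in[0,T]}\|\cdot(\cdot,t)\|_{L^2}$ is precisely the $C([0,T];L^2)$-norm, only the first term on the left is needed, and I would discard the $L^2(0,T;H^1)$-contribution.

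Next I would insert $\mathcal{U}_\varepsilon$ by the triangle inequality, writing in the junction's body $v_\varepsilon - v^+_0 = (v_\varepsilon - \mathcal{U}_\varepsilon) + (\mathcal{U}_\varepsilon - v^+_0)$ on $\Omega_0$, and on each branch $v_\varepsilon - v^{(i,m)}_0 = (v_\varepsilon - \mathcal{U}_\varepsilon) + (\mathcal{U}_\varepsilon - v^{(i,m)}_0)$ on $G^{(i,m)}_\varepsilon$. The first summand of each identity is controlled uniformly in $t$ by the estimate above. The second summand equals, by the very construction of $\mathcal{U}_\varepsilon$, the collection of corrector and boundary-layer terms: the regular $\varepsilon$-order correctors have $C([0,T];L^2)$-norm $O(\varepsilon)$ (on the branches one uses that the total measure of $G^{(i,m)}_\varepsilon$ stays $O(1)$ as $\varepsilon\to0$ while the fast-variable profiles are bounded), and the inner and boundary-layer terms, being supported in $\varepsilon$-neighborhoods of the joint zone and of the branching levels and carrying both an $\varepsilon$-prefactor and exponentially decaying profiles, have $L^2$-norm $O(\varepsilon^{1-\rho})$ uniformly in $t$. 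Summing over $i$ and $m$ and absorbing these contributions into the $\varepsilon^{1-\rho}$ term then yields the asserted inequality.

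The only real task is thus the verification that $\|\mathcal{U}_\varepsilon - v^+_0\|_{C([0,T];L^2(\Omega_0))}$ and $\|\mathcal{U}_\varepsilon - v^{(i,m)}_0\|_{C([0,T];L^2(G^{(i,m)}_\varepsilon))}$ are $O(\varepsilon^{1-\rho})$; since every layer term was introduced with an explicit $\varepsilon$-scaling and a prescribed decay rate in Section~\ref{approx}, this reduces to elementary bookkeeping on the measures of the layer regions and on the powers of $\varepsilon$ carried by the coefficients. I expect no genuine analytic obstacle here: all the hard work --- the energy inequality for the residual $v_\varepsilon - \mathcal{U}_\varepsilon$, the exploitation of the strict monotonicity and coercivity of $\mathcal{A}_\varepsilon$, and the anisotropic trace and Poincar\'e inequalities on the thin branches that generate the $\varepsilon^{1-\rho}$ loss --- is already carried out inside Theorem~\ref{th_1}. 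The corollary merely reinterprets that estimate as a quantitative convergence of $v_\varepsilon$ to the multi-sheeted homogenized profiles $v^+_0$ and $\{v^{(i,m)}_0\}$.
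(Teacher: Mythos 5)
Your proposal is correct and follows exactly the paper's route: the paper itself derives the corollary ``directly'' from Theorem~\ref{th_1} by dropping the $L^2(0,T;H^1(\Omega_\varepsilon))$ part of the estimate, applying the triangle inequality with the approximating function $R_\varepsilon$, and observing that $R_\varepsilon$ differs from the leading profiles $v^+_0$, $v^{(i,m)}_0$ only by $\varepsilon$-prefactored, uniformly bounded corrector and layer terms, whose $C([0,T];L^2)$-norms are absorbed into $C_0\varepsilon^{1-\rho}$. Your write-up merely makes this bookkeeping explicit, which is precisely what the paper leaves implicit.
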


\end{enumerate}

\section{Formal asymptotic expansions for the solution}\label{Sec2}

\subsection{Outer expansions}\label{out_exp}

Combining the algorithm of constructing asymptotics in thin
domains with the methods of homogenization theory, we seek the
main terms of the asymptotics for the solution $v_\varepsilon$ in the form
\begin{equation}\label{3.1}
v_\varepsilon(x,t) \approx v^+_0(x,t)+\sum_{n=1}^{+\infty}\varepsilon^{n} v^+_n(x,t) \quad
\mbox{in domain} \  \Omega_0\times(0, T)
\end{equation}
and
\begin{equation}\label{3.2}
 v_\varepsilon(x,t) \approx
v^{(i,m)}_0(x,t) + \sum_{n=1}^{+\infty}\varepsilon^{n} v^{(i,m)}_n(x, \tfrac{x_1}{\varepsilon} - j,t)
\end{equation}
in the thin rod $G_j^{(i,m)}(\varepsilon)\times(0, T)$ from the $i$-th level, $j=0,\ldots,N-1.$ Let us recall that  $i \in \{0,1,2\}$
and the index $m\in \{1, 2\}$ for $i=1,$ \  $m\in \{1, 2, 3, 4\}$  for $i=2,$
and  if $i=0,$ then $m$ is absent and $G_j^{(0,m)}(\varepsilon)=G_j^{(0)}(\varepsilon)$ and $v^{(0,m)}_n =v^{(0)}_n.$

The asymptotic series (\ref{3.1}) and (\ref{3.2}) are usually called {\it outer expansions}.

Substituting the series (\ref{3.1}) in the first equation of problem
(\ref{start_prob}) and in the boundary conditions on
$\partial\Omega_0\setminus I_0,$ collecting coefficients of the
same powers of $\varepsilon$ and taking into account the first estimate in (\ref{est-2'}), we get the following relations for
the coefficient $v^+_0:$
\begin{equation}\label{3.3}
\begin{array}{rcll}
\partial_t v^+_0 - \Delta v^+_0 + k(v^+_0)& = & f_0& \quad  \text{in} \  \Omega_0\times(0, T),
\\[1mm]
\partial_{\nu} v^+_0 & = & 0 & \quad  \text{on} \  \big(\partial\Omega_0 \setminus I_0\big) \times(0, T).
\end{array}
\end{equation}

Now let us find limit relations in each domain $D_i$  (see (\ref{1.1})).
Assuming for the moment that the functions $\{v^{(i,m)}_n\}$ in (\ref{3.2}) are smooth, we write their Taylor series with respect to the variable $x_1$ at the point $x_1=\varepsilon(j+b_{i,m})$ (points $\{b_{i,m}\}$ are defined in (\ref{b1})--(\ref{b22}), $b_{0,m}=b_0= \frac12$) and pass to the "fast" \ variable $\xi_1=\varepsilon^{-1}x_1;$  \, the indexes $i,$ $m$ and $j$ are fixed. Then (\ref{3.2}) takes
the form
\begin{equation}\label{3.4}
v_\varepsilon(x,t) \approx v^{(i,m)}_0\bigl(\varepsilon(j+b_{i,m}),x_2,t\bigr)+ \sum_{n=1}^{+\infty}\varepsilon^{n}\,
   V^{(i,m,j)}_n(\xi_1,x_2,t),
\end{equation}
where
\begin{multline}
V^{(i,m,j)}_n(\xi_1,x_2,t) =
v^{(i,m)}_n\bigl(\varepsilon(j+b_{i,m}),x_2,\xi_1-j,t\bigr)
\\
+  \sum_{p=1}^{n}{( \xi_1 - j - b_{i,m})^p \over p!}\,
  {\partial^p v^{(i,m)}_{n-p} \over \partial x_1^p}
\bigl(\varepsilon(j+b_{i,m}),x_2,\xi_1-j,t\bigr). \label{3.5}
\end{multline}

Let us substitute (\ref{3.4}) into (\ref{start_prob}) instead of $v_{\varepsilon}.$ Since the Laplace operator takes the form
$\Delta=\varepsilon^{-2} {\partial^2 \over \partial\xi_1^2} + {\partial^2 \over \partial x_2^2},$
the collection of coefficients of the same power of $\varepsilon$ gives us one dimensional boundary value problems with respect to
$\xi_1$ for each $t\in (0,T).$ The first problem is the following:
\begin{equation}\label{3.6}
\begin{array}{rcll}
\partial^2_{\xi_1\xi_1} V^{(i,m,j)}_1(\xi_1,x_2,t) & = & 0,& \quad \xi_1\in I_{h_{i,m}}(b_{i,m}),
\\[1mm]
\partial_{\xi_1} V^{(i,m,j)}_1(b_{i,m} \pm \tfrac{h_{i,m}}{2},x_2,t) & = & 0,&
\end{array}
\end{equation}
where $\partial_{\xi_1}= \frac{\partial}{\partial \xi_1},$ $\partial^2_{\xi_1\xi_1}= \frac{\partial^2}{\partial \xi^2_1}$ and
$I_{h_{i,m}}(b_{i,m}) = \bigl( b_{i,m} - \frac{h_{i,m}}{2}\, , b_{i,m} +
\frac{h_{i,m}}{2} \bigr);$ \ the variable $x_2$ is regarded as a
parameter in this problem.

From (\ref{3.6}) it follows that function $V^{(i,m,j)}_1$ doesn't
depend on $\xi_1.$ Therefore, $V^{(i,m,j)}_1$ is equal to some
function $\varphi^{(i,m)}\bigl(\varepsilon(j+b_{i,m}),x_2,t\bigr).$ Since we look only for the first terms of the asymptotics, we can regard that
$\varphi^{(i,m)} \equiv 0.$ Then, due to
(\ref{3.5}), we have
\begin{equation}\label{3.7}
v^{(i,m)}_1\bigl(\varepsilon(j+b_{i,m}),x_2,\xi_1-j,t\bigr) = - \bigl( \xi_1 - j- b_{i,m} \bigr)\,   \partial_{x_1}
v^{(i,m)}_{0}\bigl(\varepsilon(j+b_{i,m}),x_2,t\bigr)\,.
\end{equation}

The problem for the function $V^{(i,m,j)}_2$ is as follows:
\begin{equation}\label{3.8}
-\partial^2_{\xi_1\xi_1} V^{(i,m,j)}_2 =
\Big(\partial^2_{x_2 x_2} v^{(i,m)}_0 - k_i\big(v^{(i,m)}_0\big)
- \partial_{t} v^{(i,m)}_0\Big)\Big|_{x_1=\varepsilon(j+b_{i,m})},
\ \   \xi_1\in  I_{h_{i,m}}(b_{i,m}),
\end{equation}
\begin{equation}
\partial_{\xi_1} V^{(i,m,j)}_2\bigl(\xi_1, x_2, t \bigr)\big|_{\xi_1=b_{i,m} \pm \tfrac{h_{i,m}}{2}} =
\Big(\mp \delta_{\alpha_i,1} \kappa_i\big(v^{(i,m)}_0(x,t)\big)
 \pm \delta_{\beta_i,1} g^{(i)}_0\bigl(x,t\bigr)\Big)\Big|_{x_1=\varepsilon(j+b_{i,m})},
\label{3.9}
\end{equation}
where $\delta_{\alpha_i,1}, \delta_{\beta_i,1}$ are Kronecker's symbols
(recall that $\alpha_i\ge 1$ and $\beta_i \ge1).$

The solvability condition for problem
(\ref{3.8})-(\ref{3.9}) is given by the differential
equation
\begin{equation}\label{3.10}
h_{i,m} \partial_{t} v^{(i,m)}_0 = h_{i,m} \partial^2_{x_2 x_2} v^{(i,m)}_0 -
h_{i,m} k_i\big(v^{(i,m)}_0\big) - 2 \delta_{\alpha_i,1} \kappa_i\big(v^{(i,m)}_0\big) +
 2 \delta_{\beta_i,1} g^{(i)}_0
\end{equation}
with respect to variables $x_2$ and $t$ at the fixed value of $x_1=\varepsilon(j+b_{i,m}).$

Since the points $\{x_1=\varepsilon(j+b_{i,m}):\ j=0,\ldots,N-1\} $ form the $\varepsilon$-net in
the interval $(0,a),$ we can extend all equations obtained above on $N$ segments to the
rectangle $D_i$ $(i=0, 1, 2).$ Thus, we get the following differential equation
\begin{equation}\label{3.10+1}
h_0 \partial_{t} v^{(0)}_0 =
h_0 \partial^2_{x_2 x_2} v^{(0)}_0 - h_0 k_0\big(v^{(0)}_0\big) -
 2 \delta_{\alpha_0,1} \kappa_0\big(v^{(0)}_0\big)  +  2 \delta_{\beta_0,1} g^{(0)}_0
\end{equation}
in $D_0 \times (0,T)$ $(h_{0,m}=h_0);$
 \ we get two differential equations (m=1, 2)
\begin{equation}\label{3.10+2}
h_{1,m} \partial_{t} v^{(1,m)}_0 =
h_{1,m} \partial^2_{x_2 x_2} v^{(1,m)}_0 -
h_{1,m} k_1\big(v^{(1,m)}_0\big) - 2 \delta_{\alpha_1,1} \kappa_1\big(v^{(1,m)}_0\big) +
 2 \delta_{\beta_1,1} g^{(1)}_0
\end{equation}
in $D_1\times (0,T);$ \ and we get four differential equations (m=1, 2, 3, 4)
\begin{equation}\label{3.10}
h_{2,m} \partial_{t} v^{(2,m)}_0 =
h_{2,m} \partial^2_{x_2 x_2} v^{(2,m)}_0 - h_{2,m} k_2\big(v^{(2,m)}_0\big) -
 2 \delta_{\alpha_2,1} \kappa_2\big(v^{(2,m)}_0\big) +
 2 \delta_{\beta_2,1} g^{(2)}_0
\end{equation}
in $D_2\times (0,T).$ Here the variable $x_1$ is regarded as a parameter.

If we substitute (\ref{3.4}) for $i=2$ into the Neumann condition on the bases
$$
Q^{(3)}_\varepsilon = \overline{\Omega}_\varepsilon \cap \{x: \ x_2 = - (l_1+l_2+l_3)\}
$$
and taking again that the points $\{x_1=\varepsilon(j+b_{2,m}):\ j=0,\ldots,N-1\} $ form the $\varepsilon$-net in
the interval $(0,a)$ in account, we obtain the following boundary conditions for functions $\{v^{(2,m)}_0\}:$
\begin{equation}\label{3.11}
\partial_{x_2} v^{(2,m)}_0\bigl(x_1, -(l_1+l_2+l_3),t\bigr) = 0, \quad  m=1, 2, 3, 4.
\end{equation}

To find transmission conditions on the joint zone $I_0$ and on each branching zones
$I_1=\{x: \ x_1\in (0, a), \ x_2= -l_1\},$ $I_2=\{x: \ x_1\in (0, a), \ x_2= -(l_1+l_2)\},$ we use the
method of matched asymptotic expansions for  the outer expansions
(\ref{3.1}), (\ref{3.2}) and inner expansions in neighborhoods of $I_0, I_1$ and $I_2.$

\subsection{Construction of inner expansions}\label{inn_exp}

\subsubsection{Inner expansion in a neighborhood of $I_0$}

\begin{wrapfigure}{r}{6cm}
\begin{center}
\includegraphics[height=6cm,width=4cm]{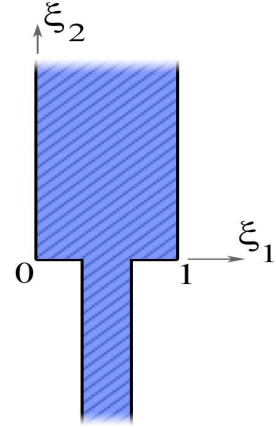}
\end{center}
\caption{The cell of periodicity $\Pi_0$}\label{F-cell1}
\end{wrapfigure}

 In a neighborhood of the joint
zone $I_0$ we introduce  the  "rapid" \ coordinates
$\xi=(\xi_1,\xi_2),$ where $\xi_1= \varepsilon^{-1}x_1$ and
$\xi_2=\varepsilon^{-1}x_2.$  Passing to $\varepsilon=0,$  we see
that the rod $G^{(0)}_0(\varepsilon)$ transforms into the semi-infinite strip
$$
\Pi^{-}_{h_0}= \big( \frac12 - \frac{h_0}{2} , \frac12 + \frac{h_0}{2} \big) \times (-\infty,0];
$$
the domain $\Omega_0$ transforms into the first quadrant  $\{\xi \, : \, \xi_1>0, \, \xi_2>0 \}.$
Taking into account the periodic structure of $\Omega_\varepsilon$ in a neighborhood of $I_0,$
we take the following cell of periodicity
$$
\Pi_0= \Pi^{-}_{h_0} \cup \Pi^+
$$
(see Fig.~\ref{F-cell1}), where junction-layer problems will be considered; here $\Pi^{+}=(0,1)\times(0,+\infty).$
Obviously, solutions of these joint-layer problems must be 1-periodic in
$\xi_1,$  i.e.,
\begin{equation}\label{periodic_cond}
\partial^{p}_{\xi_{1}}Z(\xi)\arrowvert_{\xi_{1}=0} =
\partial^{p}_{\xi_1}Z(\xi)\arrowvert_{\xi_{1}=1}\, ,
\qquad \xi \in \partial\Pi^{+} \, , \ \  \xi_2>0 , \ \ \ \  p=0, 1.
\end{equation}

We propose the following ansatz for the inner asymptotic expansion in a neighborhood of  $I_0 \cap \Omega_\varepsilon:$
\begin{equation} \label{3.12}
v_{\varepsilon} \approx  v^+_0(x_1,0,t) +  \varepsilon \Bigl(
Z^{(0)}_{1}\big(\tfrac{x}{\varepsilon}\big)\, \partial_{x_{1}} v^+_0(x_1,0,t) +
 Z^{(0)}_{2}\big(\tfrac{x}{\varepsilon}\big) \, \partial_{x_2}v^+_0(x_1,0,t) \Bigr) +\ldots
\end{equation}

Substituting (\ref{3.12}) in the differential equations of
problem (\ref{start_prob}) and in the corres\-ponding boundary
conditions, taking into account that the Laplace operator takes
the following form $\varepsilon^{-2} \Delta_\xi$ in the
coordinates $\xi$ and collecting the coefficients of the same
power of $\varepsilon,$ we arrive the following junction-layer problems for the
coefficients $Z^{(0)}_1$ and $Z^{(0)}_2$ (to these problems we must add the periodic conditions (\ref{periodic_cond})):
\begin{equation}\label{sol_Z_i}
\begin{array}{rcll}
-\Delta_{\xi} \ Z^{(0)}_p(\xi) &=& 0, &\quad \xi \in \Pi_0 ,
\\
\partial_{\xi_2}Z^{(0)}_p(\xi_1,0)&=& 0,&\quad
 \xi_1 \in (0,1)\setminus \big( \frac12 - \frac{h_0}{2} , \frac12 - \frac{h_0}{2} \big),
\\
\partial_{\xi_1} Z^{(0)}_p(\xi) &=& - \delta_{p,1},
& \quad \xi \in  \partial\Pi^{-}_{h_1} \cap \{\xi: \ \xi_2 <0\}, \qquad p=1, 2.
\end{array}
\end{equation}

The existence and the main asymptotic relations for solutions of problems~(\ref{sol_Z_i})
 can be obtained from general results about the asymptotic behavior of
solutions to elliptic problems in domains with different exits
to infinity \cite{Kond-Olei,Naz-Plam}.
However, if a domain, where we consider a boundary-value problem, has some symmetry,
then we can define more exactly the asymptotic relations and detect other properties of  junction-layer solutions
(see Lemma 4.1 and Corollary 4.1 from \cite{ZAA99}, see also \cite{MN96}). From those results  it follows
the following proposition.

\begin{prop}\label{prop1}
There exist unique solutions $Z^{(0)}_1, Z^{(0)}_2 \in H^1_{loc,\xi_2}(\Pi_0)$ to problems {\rm(\ref{sol_Z_i})}
respectively, which have the following differentiable asymptotics
\begin{equation}\label{w2}
Z^{(0)}_1(\xi)=
\begin{cases}
     \mathcal{O}(\exp(-2\pi \xi_2)), & \xi_2 \to +\infty, \\[3mm]
    \displaystyle{\Big(\!-\xi_1 +
    \frac12\Big)} + \mathcal{O}(\exp(\pi h^{-1}_0 \xi_2)), & \xi_2 \to -\infty,
\end{cases}
\end{equation}
\begin{equation}\label{w3}
Z^{(0)}_2(\xi)=
\begin{cases}
    \xi_2 + \mathcal{O}(\exp(-2\pi \xi_2)), & \xi_2 \to +\infty,
    \\[3mm]
    \displaystyle{\frac{\xi_2}{h_0}} +C_2 + \mathcal{O}(\exp(\pi h^{-1}_0 \xi_2)),
    & \xi_2 \to -\infty,
\end{cases}
\end{equation}
Moreover, function  $Z^{(0)}_1$ is odd in $\xi_1$ and  function
$Z^{(0)}_2$ is even in $\xi_1$ with respect to $\frac{1}{2}$.
\end{prop}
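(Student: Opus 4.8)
The plan is to treat (\ref{sol_Z_i}) as a harmonic problem on the domain $\Pi_0$, which has two cylindrical exits to infinity — the half-strip $\Pi^+$ (carrying the $1$-periodicity conditions (\ref{periodic_cond})) as $\xi_2\to+\infty$, and the half-strip $\Pi^-_{h_0}$ (with Neumann walls) as $\xi_2\to-\infty$ — and to invoke the Fredholm theory for elliptic operators in such domains from \cite{Kond-Olei,Naz-Plam}. The first step is to read off the cross-sectional spectral data in each exit: the Neumann Laplacian on the period cell $(0,1)$ has eigenvalues $(2\pi k)^2$, $k\in\mathbb{Z}_{\ge0}$, while on an interval of length $h_0$ it has eigenvalues $(\pi n/h_0)^2$, $n\in\mathbb{Z}_{\ge0}$. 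The zero eigenvalue in each exit produces the admissible harmonic profiles that are linear in $\xi_2$ (and, in the lower strip, linear in $\xi_1$), whereas the first nonzero eigenvalues fix the exponential rates $2\pi$ and $\pi/h_0$ appearing in (\ref{w2})--(\ref{w3}).

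Next I would subtract off these explicit profiles. Choosing smooth cut-offs $\chi^\pm$ equal to $1$ deep in the respective exits, I set $Z_p=\chi^+W_p^++\chi^-W_p^-+\widetilde Z_p$, where for $p=2$ one takes $W^+_2=\xi_2$ and $W^-_2=\xi_2/h_0+C_2$, and for $p=1$ one takes $W^+_1=0$ and $W^-_1=\tfrac12-\xi_1$ (harmonic, with $\partial_{\xi_1}W^-_1=-1$, matching the wall condition). Since the $W^\pm_p$ are harmonic and meet the boundary data exactly inside the exits, the remainder $\widetilde Z_p$ solves an inhomogeneous problem whose right-hand side (the commutators $[\Delta,\chi^\pm]W^\pm_p$) and whose boundary data are supported in a bounded part of $\Pi_0$. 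I would then seek $\widetilde Z_p$ in the finite-energy space by solving the variational problem on the truncated domains $\Pi_0\cap\{|\xi_2|<R\}$ via Lax--Milgram and passing to $R\to\infty$ with uniform energy bounds. The single scalar compatibility condition is the flux balance $\int_{\partial}\partial_\nu Z_p\,ds=0$, which for $p=2$ forces the lower growth coefficient to be exactly $1/h_0$ (conservation of flux through a strip of width $h_0$ versus width $1$), thereby pinning the profiles down; for $p=1$ neither exit carries net linear flux, so the balance is automatic.

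With $\widetilde Z_p\in H^1(\Pi_0)$ in hand, the stated asymptotics follow by separation of variables in each exit: expanding the trace of $\widetilde Z_p$ on a cross-section in the corresponding (periodic, resp.\ Neumann) eigenbasis and using harmonicity, each Fourier mode is a combination of $e^{\pm\lambda_n|\xi_2|}$, and finite energy discards the growing modes. Hence $\widetilde Z_p=\mathcal O(e^{-2\pi\xi_2})$ as $\xi_2\to+\infty$ and $\widetilde Z_p=\mathcal O(e^{\pi h_0^{-1}\xi_2})$ as $\xi_2\to-\infty$; combined with $W^\pm_p$ this is precisely (\ref{w2})--(\ref{w3}). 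Differentiability of the asymptotics (estimates for $\partial^\gamma Z_p$) comes from interior elliptic regularity together with the exponential decay of every mode. The symmetry assertions I would obtain from uniqueness: the reflection $\xi_1\mapsto 1-\xi_1$ maps $\Pi_0$ onto itself, leaves the problem for $Z^{(0)}_2$ invariant and sends the datum $-\delta_{1,1}$ for $Z^{(0)}_1$ to its negative; thus $Z^{(0)}_2(1-\xi_1,\xi_2)$ and $-Z^{(0)}_1(1-\xi_1,\xi_2)$ solve the same respective problems, whence uniqueness gives evenness of $Z^{(0)}_2$ and oddness of $Z^{(0)}_1$ about $\tfrac12$.

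I expect the main obstacle to be the solvability and finite-energy analysis in the non-compact domain, where ordinary coercivity fails because of the unbounded exits and the admissible linear growth; this is exactly where the weighted-space/Fredholm machinery of \cite{Kond-Olei,Naz-Plam} and the flux-balance compatibility condition must be used with care, and it is also where the coefficient $1/h_0$ and the constant $C_2$ are determined. Once the remainder is placed in $H^1$, the mode-by-mode asymptotics and the symmetry reductions are comparatively routine.
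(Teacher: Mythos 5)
Your proposal is correct and follows essentially the same route as the paper: the paper establishes this proposition by invoking precisely the machinery you spell out, namely the general theory of elliptic problems in domains with cylindrical exits to infinity \cite{Kond-Olei,Naz-Plam} (existence via the finite-energy/Fredholm framework with flux-balance compatibility and exponential mode decay) together with the symmetry and uniqueness refinement of Lemma 4.1 and Corollary 4.1 of \cite{ZAA99} (see also \cite{MN96}), which is exactly what your reflection argument reproduces. The only slip is terminological: the cross-sectional operator in $\Pi^{+}$ is the periodic (not Neumann) Laplacian on $(0,1)$, but the eigenvalues $(2\pi k)^2$ you use are indeed the periodic ones, so the rate $e^{-2\pi\xi_2}$ and the rest of your argument are unaffected.
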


Recall  that  a function $Z$ belongs to the Sobolev space $ H^1_{loc,\xi_2}(\Pi_0)$ if for  every $R>0$ this function
$Z \in H^1(\Pi_0 \cap \{\xi: \, |\xi_2| < R\}).$

\subsubsection{Inner expansion in a neighborhood of the first branching zone $I_1$}

\begin{wrapfigure}[18]{r}{6cm}
\begin{center}
\includegraphics[height=6cm,width=4cm]{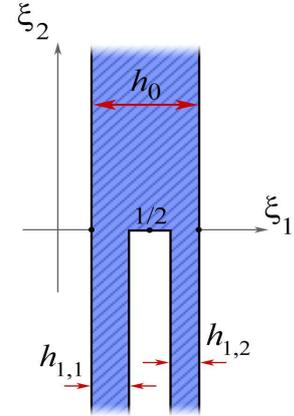}
\end{center}
\caption{The cell of periodicity $\Pi_0$}\label{F-cell2}
\end{wrapfigure}

In a neighborhood of $I_1$ we introduce  the  "rapid" \ coordinates
$\xi=(\xi_1,\xi_2),$ where $\xi_1= \varepsilon^{-1}x_1$ and
$\xi_2=\varepsilon^{-1}( x_2 + l_1).$  Passing to $\varepsilon=0$ , we see
that the rod $G^{(0)}_0(\varepsilon)$ transforms into the semi-infinite strip
$$
\Pi^+_{h_0} = \big( \frac12 - \frac{h_0}{2} , \frac12 + \frac{h_0}{2} \big) \times (0, +\infty)
$$
and rods $G^{(1,m)}_0(\varepsilon), \ m=1, 2,$ transform into the semi-infinite strips
$$
\Pi^-_{1,m} = \bigl( b_{1,m} - \frac{h_{1,m}}{2}\, , b_{1,m} + \frac{h_{1,m}}{2} \bigr) \times (-\infty, 0], \ m=1, 2,
$$
respectively.
Taking into account the periodic structure of $\Omega_\varepsilon$ in a neighborhood of $I_1,$
we take the following cell of periodicity
$$
\Pi_1 = \Pi^{+}_{h_0} \cup \Pi^-_{1,1} \cup \Pi^-_{1,2},
$$
where branch-layer problems will be considered.

Now we propose the following ansatz for the inner asymptotic expansion in a neighborhood of
$I_1\cap \big(G^{(0)}_\varepsilon \cup G^{(1)}_\varepsilon\big):$
\begin{multline}\label{3.12'}
v_{\varepsilon}(x,t) \approx  v^{(0)}_0(x_1,-l_1,t)  +  \varepsilon\Bigl(
Z^{(1)}_{1}\bigl(\tfrac{x_1}{\varepsilon} , \tfrac{x_2 + l_1}{\varepsilon}\bigr)  \partial_{x_{1}} v^{(0)}_0(x_1,-l_1,t)
\\
 + \bigl\{ \eta_1(x_1,t)\, \Xi^{(1)}_{1}\big(\tfrac{x_1}{\varepsilon} , \tfrac{x_2 + l_1}{\varepsilon}\big) +
(1-\eta_1(x_1,t) )  \, \Xi^{(1)}_{2}\big(\tfrac{x_1}{\varepsilon} , \tfrac{x_2 + l_1}{\varepsilon}\big) \bigr\}
\partial_{x_2}v^{(0)}_0(x_1,-l_1,t) \Bigr) +\ldots
\end{multline}
where  $Z^{(1)}_1(\xi), \, \Xi^{(1)}_1(\xi), \, \Xi^{(1)}_2(\xi),  \ \xi \in \Pi_1,$ \ are
solutions to branch-layer problems, which 1-periodic extended along the coordinate axis $O_{\xi_1},$ the function $\eta_1$ will be defined from matching conditions.

Substituting (\ref{3.12'}) in the corresponding differential equation of
problem (\ref{start_prob}) and boundary conditions,  we arrive branch-layer problems for the
functions $Z^{(1)}_1, \, \Xi^{(1)}_1, \, \Xi^{(1)}_2.$
So, the functions $\Xi^{(1)}_1$ and $\Xi^{(1)}_2$ are solution to the following homogeneous problem
\begin{equation}\label{3.13}
\begin{array}{rcll}
-\Delta_{\xi} \ \Xi(\xi) &=& 0, &\quad \xi \in \Pi_1 ,
\\
\partial_{\xi_1} \Xi(\xi) &=& 0,
& \quad \xi \in  \partial_{\parallel}\Pi_1,
\\
\partial_{\xi_2}\Xi(\xi_1,0)&=& 0,&\quad
 (\xi_1,0) \in \partial\Pi_1 \setminus \partial_{\parallel}\Pi_1,
\end{array}
\end{equation}
where $\partial_{\parallel}\Pi_1$ is the union of the vertical sides of $\partial\Pi_1.$
Again using approach mentioned above, we conclude.

\begin{prop}\label{prop1+1}
There exist two solutions $\Xi_1, \, \Xi_2 \in H^1_{loc,\xi_2}(\Pi_1)$ to problems
(\ref{3.13}), which have the following differentiable asymptotics:
\begin{equation}\label{3.14}
\Xi_1(\xi) =
\left\{
  \begin{array}{ll}
     \xi_2  + {\cal O}\big(\exp(- \frac{\pi \xi_2}{ h_0})\big), & \ \ \xi_2 \to +\infty,
    \   \xi \in \Pi^{+}_{h_0},
    \\[2mm]
    \dfrac{h_0}{h_{1,1}} \, \xi_2 + C_1^{(1)} + {\cal O}\big(\exp(\frac{\pi \xi_2}{ h_{1,1}})\big),
    & \ \ \xi_2 \to -\infty, \ \xi \in \Pi^{-}_{1,1},
\\[4mm]
   C_2^{(1)} + {\cal O}\big(\exp(\frac{\pi \xi_2}{ h_{1,2}})\big), & \ \
\xi_2 \to -\infty,  \ \xi \in \Pi^{-}_{1,2},
  \end{array}\right.
\end{equation}
\begin{equation}\label{3.15}
\Xi_2(\xi) =
\left\{
  \begin{array}{ll}
     \xi_2  + {\cal O}\big(\exp(- \frac{\pi \xi_2}{ h_0})\big), & \ \ \xi_2 \to +\infty,
    \   \xi \in \Pi^{+}_{h_0},
    \\[2mm]
   C_1^{(2)} + {\cal O}\big(\exp(\frac{\pi \xi_2}{ h_{1,1}})\big),
    & \ \ \xi_2 \to -\infty, \ \xi \in \Pi^{-}_{1,1},
\\[2mm]
   \dfrac{h_0}{h_{1,2}}\, \xi_2 + C_2^{(2)} + {\cal O}\big(\exp(\frac{\pi \xi_2}{ h_{1,2}})\big), & \ \
\xi_2 \to -\infty,  \ \xi \in \Pi^{-}_{1,2},
  \end{array}\right.
\end{equation}
where  $C_1^{(1)}, C_2^{(1)}, C_1^{(2)}, C_2^{(2)} $ are some fixed constants.

Any another solution to the homogeneous problem (\ref{3.13}),
which has polynomial grow at infinity, can be presented as a linear
combination $c_0 + c_1 \Xi_1 + c_2 \Xi_2.$
\end{prop}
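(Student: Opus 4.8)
The plan is to regard \eqref{3.13} as the homogeneous Neumann problem for the Laplacian in the domain $\Pi_1$, which is a cylinder with three semi-infinite outlets: one outlet $\Pi^{+}_{h_0}$ of width $h_0$ running to $+\infty$, and two outlets $\Pi^{-}_{1,1},\Pi^{-}_{1,2}$ of widths $h_{1,1},h_{1,2}$ running to $-\infty$. In each outlet, separation of variables shows that a harmonic function obeying the Neumann condition on the lateral sides splits into the one-dimensional part $a+b\,\xi_2$ (constant plus linear in the axial variable) and a series in the nonconstant cross-sectional eigenfunctions $\cos\!\big(\pi n(\xi_1-\cdot)/h\big)$, whose coefficients decay like $\exp(\mp\pi n\,\xi_2/h)$. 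Hence the first correction beyond the linear part decays at the sharp rate $\pi/h$, which already accounts for the exponents $\pi/h_0,\ \pi/h_{1,1},\ \pi/h_{1,2}$ in \eqref{3.14}--\eqref{3.15}. First I would invoke the general theory of elliptic problems in domains with cylindrical exits \cite{Kond-Olei,Naz-Plam} (refined for symmetric domains as in \cite{ZAA99,MN96}) to guarantee that every solution of \eqref{3.13} of polynomial growth admits, in each outlet, such a \emph{differentiable} asymptotics, namely the linear term plus an exponentially decaying remainder whose estimate holds together with derivatives.

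Next I would isolate the conservation law governing the admissible linear parts. Applying Green's identity to a solution $\Xi$ on the truncated domain $\Pi_1\cap\{|\xi_2|<R\}$ and letting $R\to+\infty$, the lateral contributions vanish by the Neumann condition while the end cross-sections contribute the net flux, giving the single balance relation
\begin{equation*}
h_0\,b^{+}\;=\;h_{1,1}\,b^{-}_1\;+\;h_{1,2}\,b^{-}_2,
\end{equation*}
where $b^{+},b^{-}_1,b^{-}_2$ are the slopes of the linear parts in $\Pi^{+}_{h_0},\Pi^{-}_{1,1},\Pi^{-}_{1,2}$. I would then construct $\Xi_1$ by prescribing the flux-balanced slopes $(b^{+},b^{-}_1,b^{-}_2)=(1,\,h_0/h_{1,1},\,0)$ and $\Xi_2$ by prescribing $(1,\,0,\,h_0/h_{1,2})$; in both cases the balance relation holds identically, so the existence part of the cited theory furnishes a harmonic function with these slopes, and normalizing the additive constant so that the linear part in $\Pi^{+}_{h_0}$ is exactly $\xi_2$ yields precisely the asymptotics \eqref{3.14} and \eqref{3.15}, the constants $C^{(1)}_1,C^{(1)}_2,C^{(2)}_1,C^{(2)}_2$ being thereby determined.

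Finally, to show that $\{1,\Xi_1,\Xi_2\}$ exhausts the solutions of polynomial growth, I would compute the dimension of that space. The map sending a solution to its six asymptotic coefficients $(a^{+},b^{+},a^{-}_1,b^{-}_1,a^{-}_2,b^{-}_2)$ is injective, since a solution whose coefficients all vanish decays in every outlet, hence has finite and vanishing Dirichlet integral (Green's identity with zero boundary terms) and is therefore a constant, which is $0$ because it decays. On the six-dimensional coefficient space the bilinear form obtained from Green's identity for two solutions,
\begin{equation*}
\omega(u,v)=h_0\big(a^{+}_u b^{+}_v-a^{+}_v b^{+}_u\big)-\sum_{m=1,2}h_{1,m}\big(a^{-}_{u,m}b^{-}_{v,m}-a^{-}_{v,m}b^{-}_{u,m}\big),
\end{equation*}
is a nondegenerate symplectic form, and it vanishes on the image because $\int_{\Pi_1}(u\Delta v-v\Delta u)=0$; thus the image is isotropic, and by the Fredholm theory of \cite{Kond-Olei,Naz-Plam} it is in fact maximal isotropic (Lagrangian), so its dimension equals $3$. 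Since $1,\Xi_1,\Xi_2$ carry the linearly independent slope-vectors $(0,0,0),\,(1,h_0/h_{1,1},0),\,(1,0,h_0/h_{1,2})$, they are independent and therefore span, which is exactly the asserted representation $c_0+c_1\Xi_1+c_2\Xi_2$. The main obstacle is this last step: the exact dimension and the sharp exponential rates rest on the Fredholm/asymptotic theory for cylindrical exits rather than on elementary estimates, so the crux is to verify that the hypotheses of \cite{Kond-Olei,Naz-Plam,ZAA99,MN96} genuinely apply to $\Pi_1$ and, in particular, that the isotropic image is actually Lagrangian; the flux computation and the explicit construction of $\Xi_1,\Xi_2$ are comparatively routine.
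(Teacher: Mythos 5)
Your proposal is correct and follows essentially the same route as the paper: the paper establishes this proposition purely by appeal to the general theory of elliptic problems in domains with cylindrical exits to infinity \cite{Kond-Olei,Naz-Plam}, refined as in \cite{ZAA99,MN96}, and your separation-of-variables decay rates, flux-balance relation, construction of $\Xi_1,\Xi_2$, and symplectic dimension count are precisely the standard content of those cited results. The single step you defer to the Fredholm theory --- that the isotropic image of the asymptotic-coefficient map is Lagrangian, so the solution space has dimension $3$ --- is exactly the step the paper also takes on citation, so your argument contains nothing less (and rather more detail) than the paper's own.
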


The function $Z^{(1)}_1$ is a  solution to the following problem:
\begin{equation}\label{3.15'}
\begin{array}{rcll}
-\Delta_{\xi} \ Z(\xi) &=& 0, &\quad \xi \in \Pi_1 ,
\\
\partial_{\xi_1} Z(\xi) &=& -1,
& \quad \xi \in  \partial_{\parallel}\Pi_1,
\\
\partial_{\xi_2}Z(\xi_1,0)&=& 0,&\quad
 (\xi_1,0) \in \partial\Pi_1 \setminus \partial_{\parallel}\Pi_1.
\end{array}
\end{equation}

\begin{prop}\label{prop2}
There exists the unique solution $Z \in H^1_{loc,\xi_2}(\Pi_0)$ to problems {\rm(\ref{3.15'})},
 which has the following differentiable asymptotics:
\begin{equation}\label{3.16}
Z(\xi) =
\left\{
  \begin{array}{ll}
    -\xi_1 + \frac12  + {\cal O}\big(\exp(- \frac{\pi \xi_2}{ h_0})\big), & \ \ \xi_2 \to +\infty,
    \   \xi \in \Pi^{+}_{h_0},
    \\
     -\xi_1 + b_{1,1} + C_1 + {\cal O}\big(\exp(\frac{\pi \xi_2}{ h_{1,1}})\big),
    & \ \ \xi_2 \to -\infty, \ \xi \in \Pi^{-}_{1,1},
\\[2mm]
   -\xi_1 + b_{1,2} +  C_2 + {\cal O}\big(\exp(\frac{\pi \xi_2}{ h_{1,2}})\big), & \ \
\xi_2 \to -\infty,  \ \xi \in \Pi^{-}_{1,2},
  \end{array}\right.
\end{equation}
where  $C_1, C_2$ are some fixed constants.
\end{prop}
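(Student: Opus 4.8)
The plan is to reduce the inhomogeneous Neumann problem (\ref{3.15'}) to the homogeneous problem (\ref{3.13}) already treated in Proposition~\ref{prop1+1}, after peeling off an explicit particular solution. First I would check that the linear function $Z_\ast(\xi) = -\xi_1 + \tfrac12$ is an \emph{exact} solution of (\ref{3.15'}): it is harmonic, its $\xi_1$-derivative equals $-1$ on every vertical wall $\partial_\parallel\Pi_1$, and its $\xi_2$-derivative vanishes on the horizontal part $\partial\Pi_1\setminus\partial_\parallel\Pi_1$; moreover $Z_\ast\in H^1_{loc,\xi_2}(\Pi_1)$ since $\xi_1$ stays in bounded intervals along each strip (note that the strips lie strictly inside the cell, so the periodicity relation imposes no constraint on $Z_\ast$). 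This disposes of existence and already displays the leading terms $-\xi_1+\tfrac12$ claimed in (\ref{3.16}).

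For uniqueness and the precise asymptotics I would set $W := Z - Z_\ast$ for an arbitrary polynomially growing solution $Z$. Then $W$ solves the homogeneous problem (\ref{3.13}), so by Proposition~\ref{prop1+1} one has $W = c_0 + c_1\Xi_1 + c_2\Xi_2$. Reading off the coefficients of $\xi_2$ from (\ref{3.14})--(\ref{3.15}), boundedness of $W$ in the three outlets requires $c_1 + c_2 = 0$ in $\Pi^+_{h_0}$, $c_1 h_0/h_{1,1} = 0$ in $\Pi^-_{1,1}$, and $c_2 h_0/h_{1,2} = 0$ in $\Pi^-_{1,2}$, whence $c_1 = c_2 = 0$ and $W \equiv c_0$. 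Thus every bounded solution equals $Z_\ast$ up to an additive constant, and the normalisation $Z \to -\xi_1 + \tfrac12$ in $\Pi^+_{h_0}$ fixes $c_0 = 0$, giving uniqueness. Writing $-\xi_1 + \tfrac12 = -\xi_1 + b_{1,m} + (\tfrac12 - b_{1,m})$ in the lower strips identifies the constants as $C_1 = \tfrac12 - b_{1,1}$ and $C_2 = \tfrac12 - b_{1,2}$. The exponential remainders in (\ref{3.16}) vanish identically here, since $Z = Z_\ast$ exactly; I would nonetheless retain the $\mathcal{O}(\exp(\pi\xi_2/h_{1,m}))$ and $\mathcal{O}(\exp(-\pi\xi_2/h_0))$ notation, the decay rates being the square roots of the first nonzero transverse Neumann eigenvalues $(\pi/h_{1,m})^2$, $(\pi/h_0)^2$ on the cross-sections.

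The one substantial ingredient is the classification used in the second step, namely that a polynomially growing Neumann solution in the three-outlet domain $\Pi_1$ is exactly $c_0 + c_1\Xi_1 + c_2\Xi_2$ with exponentially decaying remainders. I would take this from the Fredholm theory of elliptic boundary-value problems in domains with several cylindrical exits to infinity (\cite{Kond-Olei,Naz-Plam}), together with the symmetry refinements of \cite{ZAA99,MN96}: in suitable weighted spaces the operator is Fredholm and the admissible growth exponents are governed by the transverse spectrum, while the reentrant corners at the junction create no obstruction since the smooth solution $Z_\ast$ is available outright. Granting this classification, the argument above is routine, so the real difficulty is concentrated in the homogeneous analysis preceding the statement rather than in Proposition~\ref{prop2} itself.
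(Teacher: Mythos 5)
Your proof is correct, and it takes a genuinely different route from the paper's. The paper does not prove Proposition~\ref{prop2} directly: as with Propositions~\ref{prop1} and~\ref{prop1+1}, it invokes the general theory of elliptic problems in domains with cylindrical exits to infinity \cite{Kond-Olei,Naz-Plam}, refined by the symmetry results of \cite{ZAA99,MN96}, so both existence and the asymptotics (\ref{3.16}) are read off from Fredholm-type solvability statements in weighted spaces. You instead notice the special feature of the cell $\Pi_1$ that makes this machinery unnecessary for the inhomogeneous problem: since all of $\Pi_1$ lies strictly inside the period cell, the periodicity requirement (\ref{periodic_cond}) is vacuous, every portion of $\partial_{\parallel}\Pi_1$ carries the same condition $\partial_{\xi_1}Z=-1$, and hence the linear function $Z_\ast=-\xi_1+\tfrac12$ solves (\ref{3.15'}) exactly; the general theory then enters only through the classification of polynomially growing solutions of the homogeneous problem (\ref{3.13}) stated at the end of Proposition~\ref{prop1+1}, from which your uniqueness argument ($c_1=c_2=0$ by boundedness in the two lower outlets, then $c_0=0$ by the normalization in $\Pi^{+}_{h_0}$) is complete. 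Your route also yields strictly more information than the statement records: the constants are identified as $C_1=\tfrac12-b_{1,1}=\tfrac{h_0-h_{1,1}}{2}$ and $C_2=\tfrac12-b_{1,2}=\tfrac{h_{1,2}-h_0}{2}$, the exponential remainders in (\ref{3.16}) vanish identically, and the same argument gives $Z^{(2,1)}_1=-\xi_1+b_{1,1}$ and $Z^{(2,2)}_1=-\xi_1+b_{1,2}$ in (\ref{3.16'})--(\ref{3.16''}). What the paper's approach buys in exchange is uniformity: the same citation covers the junction-layer problem (\ref{sol_Z_i}) in $\Pi_0$, where your shortcut genuinely fails --- there the upper outlet $\Pi^{+}$ reaches the cell walls, the periodicity condition excludes the linear function, and $Z^{(0)}_1$ really does interpolate between exponential decay and $-\xi_1+\tfrac12$.
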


Thus, we set $\Xi^{(1)}_1=\Xi_1,$  $\Xi^{(1)}_2=\Xi_2$ and $Z^{(1)}_1= Z.$

\subsubsection{Inner expansion in a neighborhood of the second branching zone $I_2$}\label{zone-I_2}

 In a neighborhood of $I_2$ we introduce  the  "rapid" \ coordinates
$\xi=(\xi_1,\xi_2),$ where $\xi_1= \varepsilon^{-1}x_1$ and
$\xi_2=\varepsilon^{-1}( x_2 + l_1 +l_2).$  Passing to $\varepsilon=0,$  we see
that the rods $G^{(1,m)}_0(\varepsilon), \ m=1, 2,$ transform into the semi-infinite strips
$
\Pi^+_{1,m} = \bigl( b_{1,m} - \frac{h_{1,m}}{2}\, , b_{1,m} + \frac{h_{1,m}}{2} \bigr) \times (0, +\infty), \ m=1, 2,
$
respectively, and the rods $G^{(2,m)}_0(\varepsilon), \ m=1, 2, 3, 4,$ transform into the semi-infinite strips
$
\Pi^-_{2,m} = \bigl( b_{2,m} - \frac{h_{2,m}}{2}\, , b_{2,m} + \frac{h_{2,m}}{2} \bigr) \times (-\infty, 0], \ m=1, 2, 3, 4,
$
respectively.

Taking into account the periodic structure of $\Omega_\varepsilon$ in a neighborhood of $I_2,$
we take the following two cells of periodicity
$$
\Pi^{(1)}_2 = \Pi^{+}_{1,1} \cup \Pi^-_{2,1} \cup \Pi^-_{2,2}
\quad
\mbox{and}
\quad
\Pi^{(2)}_2 = \Pi^{+}_{1,2} \cup \Pi^-_{2,3} \cup \Pi^-_{2,4},
$$
where branch-layer problems will be considered.

Now we propose the following two inner asymptotic expansions in a neighborhood of
$I_2\cap \big(G^{(1)}_\varepsilon \cup G^{(2)}_\varepsilon\big),$ namely the first one is as follows:
\begin{multline}\label{in1}
 v_{\varepsilon}(x,t) \approx  v^{(1,1)}_0(x_1,0,t) +  \varepsilon\Bigl(
Z^{(2,1)}_{1}\bigl(\tfrac{x_1}{\varepsilon} , \tfrac{x_2 + l_1 + l_2}{\varepsilon}\bigr)\, \partial_{x_{1}} v^{(1,1)}_0(x_1,0,t)
\\
 + \Bigl\{ \eta_{2,1}(x_1,t)\, \Xi^{(2,1)}_{1}\big(\tfrac{x_1}{\varepsilon} , \tfrac{x_2 + l_1+ l_2}{\varepsilon}\big)
 +
(1-\eta_{2,1}(x_1,t) )  \, \Xi^{(2,1)}_{2}\big(\tfrac{x_1}{\varepsilon} , \tfrac{x_2 + l_1 + l_2}{\varepsilon}\big) \Bigr\}
\partial_{x_2}v^{(1,1)}_0(x_1,0,t) \Bigr) +\ldots
\end{multline}
in a neighborhood of $I_2\cap \Big( G^{(1,1)}_{\varepsilon} \bigcup \big(\bigcup_{m=1}^2  G^{(2,m)}_{\varepsilon}\big) \Big),$
and the second one
\begin{multline}\label{in2}
 v_{\varepsilon}(x,t) \approx  v^{(1,2)}_0(x_1,0,t) +  \varepsilon\Bigl(
Z^{(2,2)}_{1}\bigl(\tfrac{x_1}{\varepsilon} , \tfrac{x_2 + l_1 + l_2}{\varepsilon}\bigr)\, \partial_{x_{1}} v^{(1,2)}_0(x_1,0,t)
\\
+ \Bigl\{ \eta_{2,2}(x_1,t)\, \Xi^{(2,2)}_{1}\big(\tfrac{x_1}{\varepsilon} , \tfrac{x_2 + l_1 + l_2}{\varepsilon}\big)
+
(1-\eta_{2,2}(x_1,t) )  \, \Xi^{(2,2)}_{2}\big(\tfrac{x_1}{\varepsilon} , \tfrac{x_2 + l_1 + l_2}{\varepsilon}\big) \Bigr\}
\partial_{x_2}v^{(1,2)}_0(x_1,0,t) \Bigr) +\ldots
\end{multline}
in a neighborhood of
$I_2\cap \Big(G^{(1,2)}_{\varepsilon} \bigcup \big(\bigcup_{m=3}^4 G^{(2,m)}_{\varepsilon}\big) \Big).$

Coefficients $Z^{(2,1)}_{1}(\xi), \, \Xi^{(2,1)}_1(\xi), \, \Xi^{(2,1)}_2(\xi)  \ \big(\xi \in \Pi^{(1)}_2\big)$ in (\ref{in1})
and coefficients  $Z^{(2,2)}_{1}(\xi),$ $\Xi^{(2,2)}_1(\xi),$ $\Xi^{(2,2)}_2(\xi)  \ \big(\xi \in \Pi^{(2)}_2\big)$ in (\ref{in2})
are
solutions to branch-layer problems, which 1-periodic extended along
the coordinate axis $O_{\xi_1};$ the functions $\eta_{2,1}$ and $\eta_{2,2}$ will be defined from matching conditions.

Namely, $Z^{(2,1)}_{1}$ and $Z^{(2,2)}_{1}$ are solutions to problem (\ref{3.15'}) but now in  in $\Pi^{(1)}_2$
and $\Pi^{(2)}_2$ respectively. Applying  results of Proposition~\ref{prop2}, we can state that  there exist the unique solutions
 with the following differentiable asymptotics:
\begin{equation}\label{3.16'}
Z^{(2,1)}_{1}(\xi) =
\left\{
  \begin{array}{ll}
    -\xi_1 + b_{1,1}  + {\cal O}\big(\exp(- \frac{\pi \xi_2}{ h_{1,1}})\big), & \ \ \xi_2 \to +\infty,
    \   \xi \in \Pi^{+}_{1,1},
    \\
     -\xi_1 + b_{2,1} + C_1^{(3)} + {\cal O}\big(\exp(\frac{\pi \xi_2}{ h_{2,1}})\big),
    & \ \ \xi_2 \to -\infty, \ \xi \in \Pi^{-}_{2,1},
\\[2mm]
   -\xi_1 + b_{2,2} +  C_2^{(3)} + {\cal O}\big(\exp(\frac{\pi \xi_2}{ h_{2,2}})\big), & \ \
\xi_2 \to -\infty,  \ \xi \in \Pi^{-}_{2,2},
  \end{array}\right.
\end{equation}
\begin{equation}\label{3.16''}
Z^{(2,2)}_{1}(\xi) =
\left\{
  \begin{array}{ll}
    -\xi_1 + b_{1,2}  + {\cal O}\big(\exp(- \frac{\pi \xi_2}{ h_{1,2}})\big), & \ \ \xi_2 \to +\infty,
    \   \xi \in \Pi^{+}_{1,2},
    \\
     -\xi_1 + b_{2,3} + C_1^{(4)} + {\cal O}\big(\exp(\frac{\pi \xi_2}{ h_{2,3}})\big),
    & \ \ \xi_2 \to -\infty, \ \xi \in \Pi^{-}_{2,3},
\\[2mm]
   -\xi_1 + b_{2,4} +  C_2^{(4)} + {\cal O}\big(\exp(\frac{\pi \xi_2}{ h_{2,4}})\big), & \ \
\xi_2 \to -\infty,  \ \xi \in \Pi^{-}_{2,4}.
  \end{array}\right.
\end{equation}

Functions $\Xi^{(2,1)}_1,  \Xi^{(2,1)}_2$ and $\Xi^{(2,2)}_1,  \Xi^{(2,2)}_2$ are
 solutions to problem (\ref{3.13}) but now in $\Pi^{(1)}_2$ and $\Pi^{(2)}_2$ respectively. From Proposition~\ref{prop1+1} it follows
that they have the corresponding differentiable asymptotics (\ref{3.14}) and (\ref{3.15}).

\section{Matching of asymptotic expansions and homogenized problem}\label{Sec3}

We have formally constructed  the leading terms of the asymptotic expansions constructed in subsections~\ref{out_exp} and \ref{inn_exp}
 in different parts of the thick fractal junction $\Omega_\varepsilon.$
Next we apply the method of matched asymptotic expansions \cite{I} to complete the constructions.
Following this method,  the asymptotics of the leading terms of outer
expansions (\ref{3.1}) and (\ref{3.2}) as $x_2 \to \pm - \sum_{p=0}^m l_p, \ m=0, 1, 2,$ have to coincide with the corresponding asymptotics of
the inner expansions (\ref{3.12}), (\ref{3.12'}), (\ref{in1}) and (\ref{in2}) as $\eta_2\to \pm\infty$ respectively.

Near the point $(\varepsilon(j+ \frac12) , 0)\in I_0$ at the fixed
value of $t,$  the
function $v^{+}_0$ has the following asymptotics
$$
v^+_0(\varepsilon(j+\tfrac12),0,t) + \varepsilon\ \xi_2 \,
\partial_{x_2}v^+_0(\varepsilon(j+\tfrac12),0,t) + \ldots \quad \text{as} \  x_2 \to 0+0.
$$
Taking into account the asymptotics of  $Z^{(0)}_1$ and $Z^{(0)}_2$ as $\xi_2 \to +\infty$  (see (\ref{w2}) and  (\ref{w3})), we conclude that the matching conditions are  satisfied for the expansion (\ref{3.1}) and (\ref{3.12}).

The asymptotics of the outer expansion (\ref{3.2}) is equal to
\begin{equation}\label{3.17}
v^{(0)}_0(\varepsilon(j+ \tfrac12),0,t)
+  \varepsilon\Bigl(\bigl(-\xi_1+\tfrac12 +j\bigr)\, \partial_{x_1}v^{(0)}_0(\varepsilon(j+ \tfrac12),0,t)
 + \xi_2 \, \partial_{x_2}v^{(0)}_0(\varepsilon(j+\tfrac12),0,t) \Bigr)
 + \ldots
\end{equation}
as $x_2 \to 0-0, \quad  (x,t)\in G^{(0)}_j(\varepsilon)\times (0,T).$
Keeping in mind the asymptotics of functions $Z^{(0)}_1$ and $Z^{(0)}_2$ as $\xi_2\to-\infty,$ we find the asymptotics of the leading terms of  inner expansion (\ref{3.12})
\begin{equation}\label{3.18}
 v^+_0(\varepsilon(j+\tfrac12),0,t) +  \varepsilon\Bigl(
\bigl(-\xi_1+ j + \tfrac12 \bigr)\,
\partial_{x_1}v^+_0(\varepsilon(j+\tfrac12),0,t)
+ \bigl( \tfrac{\xi_2}{h_0} + C_2\bigr)\, \partial_{x_2}v^+_0(\varepsilon(j+\tfrac12),0,t) \Bigr) +\ldots
\end{equation}
 as $\  \xi_2 \to -\infty, \ \ \xi\in \Pi^{-}_{h_0}.$
 Comparing terms of (\ref{3.17}) and (\ref{3.18}) at $\varepsilon^0$ and $\varepsilon$ respectively, we
conclude that matching conditions are  satisfied if
$$
v^+_0(\varepsilon(j+ \tfrac12),0,t)=v^{(0)}_0(\varepsilon(j+\tfrac12),0,t),
\quad
\partial_{x_2}v^+_0(\varepsilon(j+\tfrac12),0,t) = h_0 \partial_{x_2}v^{(0)}_0(\varepsilon(j+\tfrac12),0,t),
$$
$j=0,1,\ldots,N-1.$ Since the points $\{x_1=\varepsilon(j+\tfrac12):\ j=0,\ldots,N-1\} $ form the $\varepsilon$-net in
the interval $(0,a),$ we can spread these relations into all interval $I_0$ and get the first transmission conditions
\begin{gather}\label{trans1'}
v^+_0(x_1,0,t)=v^{(0)}_0(x_1,0,t), \quad (x_1, t) \in (0,a) \times (0,T),
\\
\partial_{x_2}v^+_0(x_1,0,t) = h_0\, \partial_{x_2}v^{(0)}_0(x_1,0,t), \quad (x_1, t) \in (0,a) \times (0,T). \label{trans1''}
\end{gather}

Now we verify matching conditions at the point $(\varepsilon(j+ \frac12) , -l_1)\in I_1.$
It is easy to see that they are satisfied for the expansion (\ref{3.2}) as $x_2 \to -l_1+0$ $(x \in G^{(0)}_j(\varepsilon))$ and for the expansion
(\ref{3.12'}) as $\xi_2 \to +\infty$ $(\xi\in \Pi^{+}_{h_0}).$

Bearing in mind (\ref{3.14}), (\ref{3.15}) and (\ref{3.16}), we find at fixed value of $t \in (0,T)$ the following asymptotics of  (\ref{3.12'}):
\begin{gather}
v^{(0)}_0(\varepsilon(j+ b_{1,1}),-l_1,t) \,+ \, \varepsilon\Bigl(
\bigl(-\xi_1+ j + b_{1,1} + C_1\bigr)\,
\partial_{x_{1}} v^{(0)}_0(\varepsilon(j+ b_{1,1}),-l_1,t)  \notag
\\
+ \Bigl\{ \eta_1(\varepsilon(j+b_{1,1}),t) \, \bigl( \frac{h_0}{h_{1,1}}\, \xi_2 +
C^{(1)}_1\bigr)
+  \bigl( 1- \eta_1(\varepsilon(j+b_{1,1},t)) \bigr)
C^{(2)}_1 \Bigr\} \, \partial_{x_2}v^{(0)}_0(\varepsilon(j+ b_{1,1}),-l_1,t) \Bigr) +\ldots
 \notag \\
 \text{as} \ \ \xi_2 \to -\infty, \ \ \xi\in \Pi^{-}_{1,1},
 \label{3.18''}
 \end{gather}
and
\begin{gather}
v^{(0)}_0(\varepsilon(j+ b_{1,2}),-l_1,t) \,+ \, \varepsilon\Bigl(
\bigl(-\xi_1+ j + b_{1,2} + C_2\bigr)\,
\partial_{x_{1}} v^{(0)}_0(\varepsilon(j+ b_{1,2}),-l_1,t)  \notag
\\
+ \Bigl\{ \bigl( 1- \eta_1(\varepsilon(j+b_{1,2},t)) \bigr)
\bigl( \frac{h_0}{h_{1,2}}\, \xi_2 + C^{(2)}_2\bigr)
 + \eta_1(\varepsilon(j+b_{1,2}),t) \, C^{(1)}_2 \Bigr\} \, \partial_{x_2}v^{(0)}_0(\varepsilon(j+ b_{1,2}),-l_1,t) \Bigr) +\ldots
 \notag
 \\
\text{as} \ \ \xi_2 \to -\infty, \ \ \xi\in \Pi^{-}_{1,2}.
 \label{3.18'''}
\end{gather}

Asymptotic forms of outer expansions (\ref{3.2}) at $i=1$ and $m=1, 2$ are equal to
\begin{gather}
v^{(1,1)}_0(\varepsilon(j+ b_{1,1}),-l_1,t)  +
 \varepsilon\Bigl(\bigl(-\xi_1+ b_{1,1} +j\bigr)\, \partial_{x_1}v^{(1,1)}_0(\varepsilon(j+ b_{1,1}),-l_1,t)
 \notag
\\
 + \, \xi_2 \, \partial_{x_2}v^{(1,1)}_0(\varepsilon(j+b_{1,1}),-l_1,t) \Bigr) + \ldots  \label{out1_3.17}
\end{gather}
as $x_2 \to -l_1-0, \  x \in G^{(1,1)}_j(\varepsilon),$ and
\begin{gather}
v^{(1,2)}_0(\varepsilon(j+ b_{1,2}),-l_1,t)  +
 \varepsilon\Bigl(\bigl(-\xi_1+ b_{1,2} +j\bigr)\, \partial_{x_1}v^{(1,2)}_0(\varepsilon(j+ b_{1,2}),-l_1,t)
 \notag
\\
 +  \xi_2 \, \partial_{x_2}v^{(1,2)}_0(\varepsilon(j+b_{1,2}),-l_1,t) \Bigr)
 + \ldots  \label{out2_3.17}
\end{gather}
as $ x_2 \to -l_1-0, \  x \in G^{(1,2)}_j(\varepsilon).$

To satisfy the matching conditions, we compare terms of (\ref{3.18''}) and  (\ref{out1_3.17}), (\ref{3.18'''}) and (\ref{out2_3.17}) at
$\varepsilon^0$ and $\varepsilon^1.$ As a result, we get
\begin{gather*}\label{3.20}
v^{(0)}_0(\varepsilon(j+ b_{1,m}),-l_1,t) =v^{(1,m)}_0(\varepsilon(j+ b_{1,m}),-l_1,t) , \quad m=1, 2,
\\[2mm]
 \eta_1(\varepsilon(j+b_{1,1}),t) \,  h_0\,\partial_{x_2}v^{(0)}_0(\varepsilon(j+ b_{1,1}),-l_1,t) = h_{1,1}\,
\partial_{x_2}v^{(1,1)}_0(\varepsilon(j+b_{1,1}),-l_1,t), \label{3.21}
\\[2mm]
\bigl( 1- \eta_1(\varepsilon(j+b_{1,2},t)) \bigr)\, h_0\, \partial_{x_2}v^{(0)}_0(\varepsilon(j+ b_{1,2}),-l_1,t)=h_{1,2}\,
\partial_{x_2}v^{(1,2)}_0(\varepsilon(j+b_{1,2}),-l_1,t), \label{3.22}
\end{gather*}
for $j=0,1,\ldots,N-1.$
Since the sets $\{x_1=\varepsilon(j+b_{1,1}):\ j=0,\ldots,N-1\} $ $\{x_1=\varepsilon(j+b_{1,2}):\ j=0,\ldots,N-1\} $
form the $\varepsilon$-net in the interval $(0,a),$ we can spread these relations
into all interval $I_1$ and deduce the second transmission conditions
\begin{gather}\label{second_trans1}
v^{(0)}_0(x_1,-l_1,t) = v^{(1,m)}_0(x_1,-l_1,t), \quad m=1, 2,
\\
 h_0 \partial_{x_2}v^{(0)}_0(x_1,-l_1,t) = h_{1,1} \partial_{x_2}v^{(1,1)}_0(x_1,-l_1,t) + h_{1,2} \partial_{x_2}v^{(1,2)}_0(x_1,-l_1,t) \label{second_trans2}
\end{gather}
and determine the function
\begin{equation}\label{3.23}
  \eta_1(x_1,t) := \frac{h_{1,1}\, \partial_{x_2}v^{(1,1)}_0(x_1,-l_1,t)}
  {h_{1,1}\, \partial_{x_2}v^{(1,1)}_0(x_1,-l_1,t) +  h_{1,2} \,\partial_{x_2}v^{(1,2)}_0(x_1,-l_1,t)}
\end{equation}
for $ x_1 \in (0, a)$ and $t\in (0,T).$

Due to (\ref{second_trans1})
$$
\bigl(-\xi_1+ j + b_{1,m} \bigr)\, \partial_{x_{1}} v^{(0)}_0(\varepsilon(j+ b_{1,m}),-l_1,t)
= \bigl(-\xi_1+ j + b_{1,m}\bigr)\, \partial_{x_1}v^{(1,m)}_0(\varepsilon(j+ b_{1,m}),-l_1,t), \quad m=1, 2.
$$
Therefore, the matching conditions are satisfied for the leading terms of asymptotic expansions (\ref{3.2}) and (\ref{3.12'})
at each point $(\varepsilon(j+ \frac12) , -l_1)\in I_1,$ $j=0,1,\ldots,N-1,$  if
(\ref{second_trans1}), (\ref{second_trans2}) and (\ref{3.23}) hold.

In analogous way we can deduce the following two kinds of transmission conditions at $x_2=-(l_1+l_2):$
\begin{gather}
v^{(1,1)}_0 = v^{(2,1)}_0 = v^{(2,2)}_0  \quad \text{on} \ I_2\times(0, T), \label{third_trans1}
\\
 h_{1,1} \partial_{x_2}v^{(1,1)}_0 = h_{2,1} \partial_{x_2}v^{(2,1)}_0 + h_{2,2} \partial_{x_2}v^{(2,2)}_0
 \quad \text{on} \ I_2\times(0, T),\label{third_trans2}
\end{gather}
and
\begin{gather}
v^{(1,2)}_0 = v^{(2,3)}_0 = v^{(2,4)}_0  \quad \text{on} \ I_2\times(0, T), \label{third_trans3}
\\
 h_{1,2} \partial_{x_2}v^{(1,2)}_0 = h_{2,3} \partial_{x_2}v^{(2,3)}_0 + h_{2,4} \partial_{x_2}v^{(2,4)}_0
 \quad \text{on} \ I_2\times(0, T). \label{third_trans4}
\end{gather}
In addition, the functions $\eta_{2,1}$ and $\eta_{2,2}$ in (\ref{in1}) and (\ref{in2}) are defined by formulas
\begin{equation}\label{3.24}
  \eta_{2,1}(x_1,t)= \frac{h_{2,1}\, \partial_{x_2}v^{(2,1)}_0(x_1,-(l_1+l_2),t)}
  {h_{2,1} \,\partial_{x_2}v^{(2,1)}_0(x_1,-(l_1+l_2),t) +  h_{2,2} \,\partial_{x_2}v^{(2,2)}_0(x_1,-(l_1+l_2),t)},
\end{equation}
\begin{equation}\label{3.25}
  \eta_{2,2}(x_1,t)= \frac{h_{2,3}\, \partial_{x_2}v^{(2,3)}_0(x_1,-(l_1+l_2),t)}
  {h_{2,3}\, \partial_{x_2}v^{(2,3)}_0(x_1,-(l_1+l_2),t) +  h_{2,4} \, \partial_{x_2}v^{(2,4)}_0(x_1,-(l_1+l_2),t)}.
\end{equation}

Relations (\ref{3.3}), (\ref{3.10+1})-(\ref{3.11}), (\ref{trans1'}), (\ref{trans1''}), (\ref{second_trans1}), (\ref{second_trans2}),
(\ref{third_trans1})-(\ref{third_trans4}) form  {\it homogenized problem} for problem (\ref{start_prob}).

\section{Operator formulation of the homogenized problem}\label{hom-problem}

To give appropriately the following definition of a weak solution of the homogenized problem, let us first introduce an
 anizotropic Sobolev space ${\bf H}$ of multi-sheeted functions.
 A multi-sheeted function
$$
\boldsymbol{\varphi}:=
\Big(\varphi^+, \varphi^{(0)},  \big\{\varphi^{(1,m)}\big\}_{m=1}^2, \, \big\{\varphi^{(2,m)}\big\}_{m=1}^4\Big) =
\left\{
\begin{array}{ll}
                     \varphi^+(x), & x \in \ \Omega_0,
                      \\
                      \varphi^{(0)}(x), & x \in  \ D_0,
                      \\
                      \varphi^{(1,m)}(x), & x \in  \ D_1, \quad m =1, 2,
                      \\
                      \varphi^{(2,m)}(x), & x \in  \ D_2, \quad m =1, 2, 3, 4,
                    \end{array}
                  \right.
$$
belongs to ${\bf H}$ if
$\varphi^+ \in H^1(\Omega_0),$ \ $\{\varphi^{(i,m)}\}_{m=1}^{2i} \subset L^2(D_i),$  there exist weak derivatives
$\{\partial_{x_2} \varphi^{(i,m)}\}_{m=1}^{2i} \subset L^2(D_i), \ i=0, 1, 2,$ and
$$
\varphi^+|_{I_0} =\varphi^{(0)}|_{I_0}, \qquad \varphi^{(0)}|_{I_1} = \varphi^{(1,1)}|_{I_1}=\varphi^{(1,2)}|_{I_1},
$$
$$
\varphi^{(1,1)}|_{I_2}= \varphi^{(2,1)}|_{I_2}=\varphi^{(2,2)}|_{I_2}, \qquad
\varphi^{(1,2)}|_{I_2}= \varphi^{(2,3)}|_{I_2}=\varphi^{(2,4)}|_{I_2}.
$$
Obviously, the space ${\bf H}$ is continuously and densely embedded in the Hilbert space ${\bf V}$ of multi-sheeted functions
whose components belong to the corresponding $L^2$-spaces, i.e.,
$\boldsymbol{\varphi} \in {\bf V}$ if $\varphi^+ \in L^2(\Omega_0),$ $\{\varphi^{(i,m)}\}_{m=1}^{2i} \subset L^2(D_i), \ i=0, 1, 2.$
The scalar products in ${\bf V}$ and ${\bf H}$ are defined as follows:
$$
\boldsymbol{(} \boldsymbol{\varphi}, \boldsymbol{\psi} \boldsymbol{)}_{{\bf V}}:= (\varphi^+ , \psi^+)_{L^2(\Omega_0)} +
+ \sum_{i=0}^2\sum_{m=1}^{2i}(\varphi^{(i,m)}, \psi^{(i,m)})_{L^2(D_i)},
$$
$$
\boldsymbol{(} \boldsymbol{\varphi}, \boldsymbol{\psi} \boldsymbol{)}_{{\bf H}}:=
\boldsymbol{(} \boldsymbol{\varphi}, \boldsymbol{\psi} \boldsymbol{)}_{{\bf V}} +
(\nabla\varphi^+ , \nabla\psi^+)_{L^2(\Omega_0)} +
\sum_{i=0}^2\sum_{m=1}^{2i}(\partial_{x_2}\varphi^{(i,m)}, \partial_{x_2}\psi^{(i,m)})_{L^2(D_i)}
$$
Recall that $\varphi^{(0,m)}=\varphi^{(0)}$ (see Remark~\ref{remark_1}).
Since ${\bf H}$ is continuously and densely embedded in ${\bf V},$  we can construct the Gelfand triple ${\bf H} \subset {\bf V} \subset {\bf H}^*.$

For almost every  $t\in [0,T]$ we introduce an operator $\boldsymbol{{\cal A}}(t): {\bf H} \mapsto {\bf H}^*$  by the formula
$$
\boldsymbol{\langle} \boldsymbol{{\cal A}}(t)\boldsymbol{\varphi}, \boldsymbol{\psi} \boldsymbol{\rangle}:= \int\limits_{\Omega_0} \Big(\nabla \varphi^+ \cdot \nabla \psi^+ +
k(\varphi^+) \psi^+\Big)\, dx
$$
$$
+ \sum_{i=0}^2\sum_{m=1}^{2i}
\int\limits_{D_i}\Big(h_{i,m} \partial_{x_2}\varphi^{(i,m)} \, \partial_{x_2}\psi^{(i,m)} + h_{i,m} k_i(\varphi^{(i,m)}) \psi^{(i,m)} +
2 \delta_{\alpha_i,1} \kappa_i(\varphi^{(i,m)}) \psi^{(i,m)}\Big)dx
$$
for all $\boldsymbol{\varphi},  \boldsymbol{\psi} \in L^2(0,T; {\bf H}),$ and a linear functional ${\bf F}(t) \in {\bf H}^*$
$$
\boldsymbol{\langle} {\bf F}(t) , \boldsymbol{\psi} \boldsymbol{\rangle}:= \int\limits_{\Omega_0} f_0 \psi^+ \, dx + 2
\sum_{i=0}^2\delta_{\beta_i,1} \sum_{m=1}^{2i}  \int\limits_{D_i} g_0^{(i)}  \psi^{(i,m)}  dx.
 $$
Here $\boldsymbol{ \langle \cdot , \cdot \rangle }$ is the pairing of ${\bf H}^*$ and ${\bf H},$ \ $\boldsymbol{\psi} =
\Big(\psi^+, \psi^{(0)},  \big\{\psi^{(1,m)}\big\}_{m=1}^2, \, \big\{\psi^{(2,m)}\big\}_{m=1}^4\Big)\in L^2(0,T; {\bf H}).$

Now we can write down the homogenized problem in the form of the abstract Cauchy problem
\begin{equation}\label{CauchyProblem}
{\bf v}^{\prime} + \boldsymbol{{\cal A}}({\bf v}) = {\bf F}
\ \ \ \textrm{in} \ \ L^2\bigl(0,T; {\bf H}^*\bigr), \quad \
{\bf v}(0)=0,
\end{equation}
where ${\bf v}= \Big(v^+, v^{(0)},  \big\{v^{(1,m)}\big\}_{m=1}^2, \, \big\{v^{(2,m)}\big\}_{m=1}^4\Big)\in L^2(0,T; {\bf H}).$

\begin{definition}\label{def-weak-solution}
We say a multi-sheeted function
$$
{\bf v} \in L^2(0,T; {\bf H}), \quad \mbox{with} \quad {\bf v}^\prime \in L^2(0,T; {\bf H}^*),
$$
is a weak solution to the homogenized problem provided
$$
\boldsymbol{ \langle} {\bf v}^\prime(t), {\bf u} \boldsymbol{\rangle } + \boldsymbol{\langle} \boldsymbol{{\cal A}}(t){\bf v}, {\bf u} \boldsymbol{\rangle}
= \boldsymbol{\langle} {\bf F}(t) , {\bf u} \boldsymbol{\rangle} \quad \forall \ {\bf u} \in {\bf H} \ \ \mbox{and for a.e.} \ \ t \in (0,T),
$$
and ${\bf v}|_{t=0}=0.$
\end{definition}

\begin{remark}
In view of the well-known properties of spaces $L^p(0,T; X)$ (see for instance \cite{Showalter}), the weak solution
 ${\bf v} \in C([0,T]; {\bf V}),$ and thus the last equality in Definition~\ref{def-weak-solution} makes sence.
\end{remark}

\begin{theorem} There exists a unique weak multi-sheeted solution
 to the homogenized problem.
\end{theorem}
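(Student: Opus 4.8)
The plan is to recognize problem~(\ref{CauchyProblem}) as an abstract parabolic Cauchy problem governed by a monotone operator on the Gelfand triple ${\bf H} \subset {\bf V} \subset {\bf H}^*$, and to invoke the standard existence-uniqueness theory for such problems (as in \cite{Showalter}). Concretely, it suffices to verify that for a.e. $t \in (0,T)$ the operator $\boldsymbol{{\cal A}}(t): {\bf H} \to {\bf H}^*$ is bounded, strictly monotone, hemicontinuous and coercive, that ${\bf F} \in L^2(0,T;{\bf H}^*)$, and that the zero initial datum lies in ${\bf V}$; uniqueness will then follow from a direct energy argument.

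First I would check boundedness. The bilinear leading part (the $\nabla\varphi^+\cdot\nabla\psi^+$ integral together with the anisotropic integrals $h_{i,m}\,\partial_{x_2}\varphi^{(i,m)}\,\partial_{x_2}\psi^{(i,m)}$) is estimated by $\|\boldsymbol{\varphi}\|_{\bf H}\,\|\boldsymbol{\psi}\|_{\bf H}$ via Cauchy--Schwarz, while the zero-order terms involving $k, k_i, \kappa_i$ are controlled using the linear-growth bound $|k(s)| \le c_3(1+|s|)$ from~(\ref{est-2'}) (and its analogues); this yields $\|\boldsymbol{{\cal A}}(t)\boldsymbol{\varphi}\|_{{\bf H}^*} \le C(1 + \|\boldsymbol{\varphi}\|_{\bf H})$. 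Hemicontinuity --- continuity of $\lambda\mapsto \boldsymbol{\langle}\boldsymbol{{\cal A}}(t)(\boldsymbol{\varphi}+\lambda\boldsymbol{w}),\boldsymbol{\psi}\boldsymbol{\rangle}$ --- follows from the continuity of the nonlinearities and Lebesgue's dominated convergence theorem, the Lipschitz bounds furnishing an integrable dominating function.

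Monotonicity and coercivity are the heart of the matter, and here hypothesis~(\ref{cond-2}) is used. For two multi-sheeted functions the difference $\boldsymbol{\langle}\boldsymbol{{\cal A}}(t)\boldsymbol{\varphi} - \boldsymbol{{\cal A}}(t)\boldsymbol{\psi}, \boldsymbol{\varphi} - \boldsymbol{\psi}\boldsymbol{\rangle}$ consists of the nonnegative quadratic derivative terms plus differences of the form $(k(a)-k(b))(a-b)$, each bounded below by $c_1(a-b)^2$ since $k' \ge c_1 > 0$ (and likewise for $k_i, \kappa_i$); hence the operator is strictly monotone. For coercivity I would test against $\boldsymbol{\varphi}$ itself: the derivative integrals reproduce the gradient part of the ${\bf H}$-norm, while the inequality $k(s)\,s \ge c_1 s^2 + k(0)s$ from~(\ref{est-1}) (and its analogues) supplies the missing $L^2$-control of $\varphi^+$ and of the sheets $\varphi^{(i,m)}$; the nonnegative Robin contributions and the residual linear terms $k(0)\int\varphi^+$, etc., are absorbed via Young's inequality, giving $\boldsymbol{\langle}\boldsymbol{{\cal A}}(t)\boldsymbol{\varphi},\boldsymbol{\varphi}\boldsymbol{\rangle} \ge c\,\|\boldsymbol{\varphi}\|_{\bf H}^2 - C$.

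With these four properties and ${\bf F} \in L^2(0,T;{\bf H}^*)$ (immediate from $f_0 \in L^2$ and $g_0^{(i)} \in L^2$), the abstract theory of \cite{Showalter} yields a weak solution ${\bf v} \in L^2(0,T;{\bf H})$ with ${\bf v}' \in L^2(0,T;{\bf H}^*)$ and ${\bf v}(0)=0$. For uniqueness I would take two solutions, subtract the equations, test the difference with ${\bf v}_1 - {\bf v}_2$, and use monotonicity to obtain $\tfrac12\tfrac{d}{dt}\|{\bf v}_1 - {\bf v}_2\|_{\bf V}^2 \le 0$; since both start from $0$, they coincide. The main obstacle I anticipate is purely in the setup of the anisotropic space ${\bf H}$: one must confirm that the trace-matching constraints at $I_0, I_1, I_2$ make ${\bf H}$ a well-defined Hilbert space densely and continuously embedded in ${\bf V}$, and that coercivity genuinely holds for the anisotropic norm, in which only $\partial_{x_2}$-derivatives are controlled on the $D_i$. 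This is precisely why the $L^2$-lower bound from~(\ref{est-1}) is indispensable --- there is no Poincaré inequality to recover that control from the derivative terms alone.
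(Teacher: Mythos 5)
Your proposal is correct and follows essentially the same route as the paper: verify that $\boldsymbol{{\cal A}}(t)$ is bounded, (strongly) monotone via $k',k_i',\kappa_i'\ge c_1$ from~(\ref{cond-2}), hemicontinuous via dominated convergence, and coercive via~(\ref{est-1}) with the linear terms absorbed by Young's inequality, and then invoke the monotone-operator theory of \cite{Showalter} on the Gelfand triple ${\bf H}\subset{\bf V}\subset{\bf H}^*$. Your closing observation --- that the $L^2$-lower bound from the nonlinearities is indispensable because the anisotropic norm on ${\bf H}$ admits no Poincar\'e inequality on the sheets --- is exactly the point implicit in the paper's strong-monotonicity estimate; the only cosmetic difference is that you supply a direct energy argument for uniqueness where the paper obtains it from Corollary 4.1 of \cite{Showalter}.
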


\begin{proof} Let us show that for a.e. $t\in (0,T)$ the operator $\boldsymbol{{\cal A}}$  is  bounded, strictly monotone, and hemicontinuous.

(1)
Using (\ref{est-1}), (\ref{est-2})  and the definition of $\boldsymbol{{\cal A}},$  we can prove the following inequality
$$
\big|\boldsymbol{\langle} \boldsymbol{{\cal A}}(t)\boldsymbol{\varphi}, \boldsymbol{\psi} \boldsymbol{\rangle}\big| \le C_1 (1 + \|\boldsymbol{\varphi}\|_{{\bf H}}) \|\boldsymbol{\psi}\|_{{\bf H}} \quad \forall \, \boldsymbol{\varphi}, \boldsymbol{\psi} \in {\bf H},
$$
from where it follow that $\boldsymbol{{\cal A}}$ is bounded.

(2) Operator $\mathcal{A}$ is strongly monotone. Really, with the help of (\ref{cond-2}) we get
$$
\boldsymbol{\langle} \boldsymbol{{\cal A}}\boldsymbol{\varphi} - \boldsymbol{{\cal A}}\boldsymbol{\psi} , \boldsymbol{\varphi} - \boldsymbol{\psi} \boldsymbol{\rangle} \ge \int\limits_{\Omega_0} \Big(\big|\nabla (\varphi^+ - \psi^+)\big|^2 + c_1 \big|\varphi^+ - \psi^+\big|^2 \Big)\, dx
$$
$$
+ \sum_{i=0}^2\sum_{m=1}^{2i}
\int\limits_{D_i}\Big(h_{i,m} \big| \partial_{x_2}\varphi^{(i,m)} - \partial_{x_2}\psi^{(i,m)}\big|^2 +
(h_{i,m} + 2 \delta_{\alpha_i,1}) c_1 \big|\varphi^{(i,m)} - \psi^{(i,m)}\big|^2 \Big)\, dx
$$
$$
\geq C_2 \|\boldsymbol{\varphi} - \boldsymbol{\psi}\|^2_{{\bf H}} \qquad \forall \ \boldsymbol{\varphi},  \boldsymbol{\psi} \in {\bf H}.
$$

(3) Operator $\boldsymbol{{\cal A}}$ is hemicontinuous. Indeed,   the real valued function
$$
[0, 1] \ni \tau \longmapsto  \boldsymbol{\langle} \boldsymbol{{\cal A}}(\boldsymbol{\varphi} + \tau \boldsymbol{\upsilon}) , \boldsymbol{\psi} \boldsymbol{\rangle}
$$
is continuous on $[0,1]$ for all fixed $\boldsymbol{\varphi},  \boldsymbol{\psi}, \boldsymbol{\upsilon} \in {\bf H}$
due to the continuity of the functions $\{k, k_i, \kappa_i\},$  the right inequality in (\ref{est-2'}), and Lebesgue's dominated convergence theorem.

Thus,  the realization $\boldsymbol{{\cal A}}: L^2\bigl(0,T; {\bf H}\bigr)
\mapsto L^2\bigl(0,T; {\bf H}^*\bigr)$ (we denote it by the same symbol)
is bounded, monotone, and hemicontinuous, i.e.,  $\boldsymbol{{\cal A}}$ is type of $M$ (see Lemma~2.1 \cite{Showalter}).

(4) Operator $\mathcal{A}$ is coercive. Using (\ref{est-1}), (\ref{est-2'}), and the Cauchy's inequality with $\delta$ $(ab\le\delta a^{2}+\frac{b^{2}}{4\delta},\ \ a, b, \delta>0),$ we find
$$
\int_{0}^{T} \boldsymbol{\langle} \boldsymbol{{\cal A}}(t)\boldsymbol{\varphi}, \boldsymbol{\varphi} \boldsymbol{\rangle}  \,dt \ge
C_3 \int_{0}^{T}\|\boldsymbol{\varphi}\|^2_{{\bf H}}\, dt  - |k(0)| \int\limits_{0}^{T}\int\limits_{\Omega_0} |\varphi^+| \, dxdt
$$
$$
- \sum_{i=0}^2\sum_{m=1}^{2i}
\big(h_{i,m}  |k_i(0)|  + 2 \delta_{\alpha_i,1} |\kappa_i(0)|\big) \int\limits_{0}^{T}\int\limits_{D_i} |\varphi^{(i,m)}| \, dxdt
$$
$$
\ge C_3 \int_{0}^{T}\|\boldsymbol{\varphi}\|^2_{{\bf H}}\, dt - \delta  \int_{0}^{T}\|\boldsymbol{\varphi}\|^2_{{\bf V}}\, dt - C_4(\delta)
$$
for each $\boldsymbol{\varphi} \in L^2\bigl(0,T; {\bf H}\bigr).$ By selecting appropriate $\delta,$ we obtain the desired inequality
for the coerciveness.

By Corollary 4.1 \cite{Showalter},  problem (\ref{CauchyProblem}) has a
unique solution.
\end{proof}

\section{Asymptotic approximation}\label{approx}

Let ${\bf v} = \Big(v^+, v^{(0)},  \big\{v^{(1,m)}\big\}_{m=1}^2, \, \big\{v^{(2,m)}\big\}_{m=1}^4\Big) \in  L^2(0,T; {\bf H})$ be a unique weak solution to the homogenized problem (\ref{CauchyProblem}).
With the help of ${\bf v},$ the junction-layer solutions  $Z^{(0)}_1$ and $Z^{(0)}_2$ (see Proposition~\ref{prop1}),
the branch-layer solutions $\big\{Z^{(1)}_1, \Xi^{(1)}_1,  \Xi^{(1)}_2\big\}$ (see Propositions~\ref{prop1+1}, \ref{prop2})
in a neighborhood of the first branching zone $I_1$,
and the branch-layer solutions $\big\{Z^{(2,1)}_{1}, Z^{(2,2)}_{1},$ $ \Xi^{(2,1)}_1,  \Xi^{(2,1)}_2, \Xi^{(2,2)}_1,  \Xi^{(2,2)}_2\big\}$
in a neighborhood of the second branching zone $I_2$ (see \S~\ref{zone-I_2}), we  define the leading terms in
the asymptotic expansions  (\ref{3.1}), (\ref{3.2}), (\ref{3.12}),  (\ref{3.12'}), (\ref{in1}), and (\ref{in2}).

An approximating function $R_\varepsilon$ is constructed as the sum of the leading terms
of the outer expansions (\ref{3.1}), (\ref{3.2}) and the inner expansion (\ref{3.12}),  (\ref{3.12'}), (\ref{in1}),  (\ref{in2})
in neighborhoods of the joint zone $I_0$ and branching zones $I_1, I_2$ respectively, with the subtraction of the identical terms of their asymptotics because they are summed twice.
As a result, we obtain
\begin{equation}
R_{\varepsilon}(x,t) = v^{+}(x,t) + \varepsilon
\chi_0(x_2)\, {\cal N}^{(0)}_{+}\bigl(\tfrac{x}{\varepsilon}, x_1 ,t \bigr), \quad  (x,t) \in \Omega_0 \times (0,T); \label{4.1}
\end{equation}
\begin{multline}
R_{\varepsilon} = v^{(0)}(x,t)  + \varepsilon\Bigl( Y_0(\tfrac{x_1}{\varepsilon})\, \partial_{x_1}v^{(0)}(x,t) +
\chi_0(x_2){\cal N}^{(0)}_{-}\bigl(\tfrac{x}{\varepsilon}, x_1, t \bigr)
+ \chi_1(x_2){\cal N}^{(1)}\bigl(\tfrac{x_1}{\varepsilon}, \tfrac{x_2 + l_1}{\varepsilon}, x_1, t \bigr) \Bigr),
\\
 (x,t) \in  G^{(0)}_\varepsilon\times (0,T);  \label{4.2}
\end{multline}
\begin{gather}
R_{\varepsilon} = v^{(1,m)}(x,t)  + \varepsilon\Bigl( Y_{1,m}(\tfrac{x_1}{\varepsilon})\, \partial_{x_1}v^{(1,m)}(x,t) +
\chi_1(x_2){\cal N}^{(1)}_{1,m}\bigl(\tfrac{x_1}{\varepsilon}, \tfrac{x_2 + l_1}{\varepsilon}, x_1, t \bigr)
\notag
\\[2mm]
+ \chi_2(x_2){\cal N}^{(2)}_m\bigl(\tfrac{x_1}{\varepsilon}, \tfrac{x_2 + l_1 + l_2}{\varepsilon}, x_1, t \bigr) \Bigr), \quad (x,t) \in  G^{(1,m)}_\varepsilon\times (0,T), \ m = 1, 2;  \label{4.3}
\end{gather}
\begin{gather}
R_{\varepsilon} = v^{(2,m)}(x,t)  + \varepsilon\Bigl( Y_{2,m}(\tfrac{x_1}{\varepsilon})\, \partial_{x_1}v^{(2,m)}(x,t) +
\chi_2(x_2){\cal N}^{(2)}_{2,m}\bigl(\tfrac{x_1}{\varepsilon}, \tfrac{x_2 + l_1 + l_2}{\varepsilon}, x_1, t \bigr) \Bigr),
\notag
\\[2mm]
 (x,t) \in  G^{(2,m)}_\varepsilon\times (0,T), \quad m = 1, 2, 3, 4.  \label{4.4}
\end{gather}
Here the function $\chi_0$ is a smooth cutoff function such that $\chi_0(x_2)=1$ for $|x_2| \le \tau_0/2,$ and
$\chi_0(x_2)=0$ for $|x_2| \ge \tau_0,$ where $\tau_0$ is sufficiently small number; $\chi_1(x_2)= \chi_0(x_2 + l_1),$
$\chi_2(x_2)= \chi_0(x_2 + l_1 + l_2),$ $x_2 \in \Bbb R;$\\
in {\bf (\ref{4.1})}
$$
{\cal N}^{(0)}_{+}\bigl(\xi, x_1 ,t \bigr)= \sum_{i=1}^{2}\bigl(Z^{(0)}_i(\xi) - \delta_{i,2}\xi_2\bigr) \partial_{x_i}v^{+}(x_1,0,t), \quad \xi = \tfrac{x}{\varepsilon},
$$
where $\delta_{i,2}$ is the Kronecker delta; \\
in {\bf (\ref{4.2})} $Y_0(\xi_1)=-\xi_1 + \frac12 + [\xi_1],$ where $[\xi_1]$ is the
entire part of $\xi_1,$ and
$$
{\cal N}^{(0)}_{-} = \Bigl(   Z^{(0)}_1(\xi) - Y_0(\xi_1) \Bigr) \partial_{x_1}v^{+}(x_1,0,t)
+  \Big( Z^{(0)}_2(\xi) - \tfrac{\xi_2}{h_0} \Big) \partial_{x_2}v^{+}(x_1,0,t), \quad \xi = \tfrac{x}{\varepsilon},
 $$
 \begin{multline*}
{\cal N}^{(1)} = \Big(Z^{(1)}_{1}(\xi) -  Y_0(\xi_1) \Bigr) \partial_{x_{1}} v^{(0)}(x_1,-l_1,t)
\\
+
 \Bigl( \eta_1(x_1,t) \Xi^{(1)}_{1}(\xi)
 + (1-\eta_1(x_1,t) ) \Xi^{(1)}_{2}(\xi) - \xi_2\Bigr) \partial_{x_2}v^{(0)}(x_1,-l_1,t),
 \\ \xi_1 = \tfrac{x_1}{\varepsilon}, \  \xi_2 = \tfrac{x_2 + l_1}{\varepsilon};
 \end{multline*}
in {\bf (\ref{4.3})}  $Y_{1,m}(\xi_1)= -\xi_1 + b_{1,m} + [\xi_1], \ m=1, 2,$ and
 \begin{multline*}
{\cal N}^{(1)}_{1,m}\bigl(\xi, x_1, t \bigr) = \Big(Z^{(1)}_{1}(\xi) -  Y_{1,m}(\xi_1) \Bigr) \partial_{x_{1}} v^{(0)}(x_1,-l_1,t)
 \\
  +  \Bigl( \eta_1(x_1,t) \big(\Xi^{(1)}_{1}(\xi) - \delta_{1,m} \tfrac{h_0}{h_{1,1}} \xi_2\big)
   + (1-\eta_1(x_1,t) ) \big(\Xi^{(1)}_{2}(\xi) - \delta_{2,m} \tfrac{h_0}{h_{1,2}} \xi_2\big) \Bigr) \partial_{x_2}v^{(0)}(x_1,-l_1,t),
 \\
      \xi_1 = \tfrac{x_1}{\varepsilon}, \  \xi_2 = \tfrac{x_2 + l_1}{\varepsilon},
 \end{multline*}
\begin{multline*}
{\cal N}^{(2)}_m\bigl(\xi, x_1, t \bigr) = \Big(Z^{(2,m)}_{1}(\xi) -  Y_{1,m}(\xi_1) \Bigr) \partial_{x_{1}} v^{(1,m)}(x_1,-l_1-l_2,t)
\\
+  \Bigl( \eta_{2,m}(x_1,t) \Xi^{(2,m)}_{1}(\xi)
  + (1-\eta_{2,m}(x_1,t) ) \Xi^{(2,m)}_{2}(\xi) - \xi_2\Bigr) \partial_{x_2}v^{(1,m)}(x_1,-l_1-l_2,t),
 \\  \xi_1 = \tfrac{x_1}{\varepsilon}, \  \xi_2 = \tfrac{x_2 + l_1 + l_2}{\varepsilon}, \quad m=1, 2;
 \end{multline*}
in {\bf (\ref{4.4})} $Y_{2,m}(\xi_1)= -\xi_1 + b_{2,m} + [\xi_1], \ m=1, 2, 3, 4,$ and
\begin{multline*}
{\cal N}^{(2)}_{2,m}\bigl(\xi, x_1, t \bigr) = \Big(Z^{(2,1)}_{1}(\xi) -  Y_{2,m}(\xi_1)\Bigr) \partial_{x_{1}} v^{(1,1)}(x_1,-l_1-l_2,t)
\\
+
 \Bigl( \eta_{2,1}(x_1,t) \big(\Xi^{(2,1)}_{1}(\xi) - \delta_{1,m}\tfrac{h_{1,1}}{h_{2,1}} \xi_2\big)
 + (1-\eta_{2,1}(x_1,t) ) \big(\Xi^{(2,1)}_{2}(\xi) - \delta_{2,m} \tfrac{h_{1,1}}{h_{2,2}} \xi_2\big) \Bigr) \partial_{x_2}v^{(1,1)}(x_1,-l_1-l_2,t),
 \\
   \xi_1 = \tfrac{x_1}{\varepsilon}, \  \xi_2 = \tfrac{x_2 + l_1 + l_2}{\varepsilon}, \ \ m=1, 2,
 \end{multline*}
\begin{multline*}
{\cal N}^{(2)}_{2,m}\bigl(\xi, x_1, t \bigr) = \Big(Z^{(2,2)}_{1}(\xi) -  Y_{2,m}(\xi_1) \Bigr) \partial_{x_{1}} v^{(1,2)}(x_1,-l_1-l_2,t)
\\
 + \Bigl( \eta_{2,2}(x_1,t) \big(\Xi^{(2,2)}_{1}(\xi) - \delta_{3,m}\tfrac{h_{1,2}}{h_{2,3}} \xi_2\big)
 + (1-\eta_{2,2}(x_1,t) ) \big(\Xi^{(2,2)}_{2}(\xi) - \delta_{4,m} \tfrac{h_{1,2}}{h_{2,4}} \xi_2\big) \Bigr) \partial_{x_2}v^{(1,2)}(x_1,-l_1-l_2,t),
 \\    \xi_1 = \tfrac{x_1}{\varepsilon}, \  \xi_2 = \tfrac{x_2 + l_1 + l_2}{\varepsilon}, \ m=3, 4.
 \end{multline*}

Due to (\ref{trans1'}), (\ref{trans1''}), (\ref{second_trans1}) and (\ref{third_trans1}),
the jumps $\left[ R_\varepsilon \right]|_{Q^{(i)}_\varepsilon}=0,$ $i=0,1, 2.$ This means that
the approximating function $R_\varepsilon$ belongs to $L^{2}\bigl(0,T;\, H^{1}(\Omega _{\varepsilon })\bigr).$

\begin{theorem}\label{th_1}
Suppose that in addition to the assumptions made in Section~\ref{sec1}, the following conditions
hold: the function $f_0 \in  C^1(\overline{\Omega_0}\times [0,T])$ and if some parameter $\beta_i =1$ $(i=0, 1, 2),$ then
the function $g^{(i)}_0 \in  C^1(\overline{D_i}\times [0,T])$ and it and its derivative with respect to $x_2$ vanish at $x_2= - \sum_{n=0}^i l_n$
 and $x_2= - \sum_{n=0}^{i+1} l_n.$

Then for any $\rho\in(0,1)$ there exist positive constants $C_0, \varepsilon_0$ such that for all values $\varepsilon \in
(0,\varepsilon_0)$ the difference between the solution
$v_{\varepsilon}$ to problem (\ref{start_prob})
and the approximating function~$R_{\varepsilon}$ defined by (\ref{4.1}) -- (\ref{4.4})
satisfies the following estimate
\begin{multline}\label{as-est}
 \max_{0 \leq t \leq T}\|R_\varepsilon(\cdot,t) - v_\varepsilon(\cdot,t)\|_{L^2(\Omega_\varepsilon)}
+\|R_\varepsilon - v_\varepsilon\|_{L^2(0,T;H^1(\Omega_\varepsilon))}
    \\
\le C_0 \Big(\varepsilon^{1-\rho}
 +  \sum_{i=0}^2\left(\varepsilon^{\alpha_i -1 + \delta_{\alpha_i, 1}} +
(1- \delta_{\beta_i, 1}) \varepsilon^{\beta_i - 1} + \delta_{\beta_i, 1}\|g^{(i)}_{\varepsilon } - g^{(i)}_{0}\|_{L^2(G^{(i)}_{\varepsilon})}
\right)\Big).
\end{multline}
\end{theorem}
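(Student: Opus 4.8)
The plan is to proceed by the energy method applied to the difference $W_\varepsilon := R_\varepsilon - v_\varepsilon$, exploiting the strict monotonicity of the operator ${\cal A}_\varepsilon$ that already underlies the well-posedness of \eqref{start_prob}. First I would substitute the approximating function $R_\varepsilon$ into the weak formulation \eqref{1.4}; since $R_\varepsilon$ solves the problem only up to discrepancies, this defines a residual functional ${\cal R}_\varepsilon(t) \in H^1(\Omega_\varepsilon)^*$ through
$$
\langle {\cal R}_\varepsilon, \psi\rangle_\varepsilon := \langle R'_\varepsilon, \psi\rangle_\varepsilon + \langle {\cal A}_\varepsilon(t)R_\varepsilon, \psi\rangle_\varepsilon - \langle F_\varepsilon(t), \psi\rangle_\varepsilon, \qquad \psi \in H^1(\Omega_\varepsilon).
$$
Subtracting \eqref{1.4} gives the identity $\langle W'_\varepsilon, \psi\rangle_\varepsilon + \langle {\cal A}_\varepsilon R_\varepsilon - {\cal A}_\varepsilon v_\varepsilon, \psi\rangle_\varepsilon = \langle {\cal R}_\varepsilon, \psi\rangle_\varepsilon$. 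Because every component of the homogenized solution ${\bf v}$ vanishes at $t=0$, so do all outer and layer terms, whence $W_\varepsilon|_{t=0} = 0$; moreover $R_\varepsilon \in L^2(0,T;H^1(\Omega_\varepsilon))$ (its jumps across the $Q^{(i)}_\varepsilon$ were shown to vanish), so $W_\varepsilon$ is admissible as a test function.

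Second, I would choose $\psi = W_\varepsilon$. Then $\langle W'_\varepsilon, W_\varepsilon\rangle_\varepsilon = \tfrac12 \tfrac{d}{dt}\|W_\varepsilon\|^2_{L^2(\Omega_\varepsilon)}$, and the monotonicity \eqref{cond-2} (via the mean-value theorem, using $k'\ge c_1$, $k_i'\ge c_1$, $\kappa_i'\ge c_1>0$) yields the coercive lower bound
$$
\langle {\cal A}_\varepsilon R_\varepsilon - {\cal A}_\varepsilon v_\varepsilon, W_\varepsilon\rangle_\varepsilon \ge \|\nabla W_\varepsilon\|^2_{L^2(\Omega_\varepsilon)} + c_1\|W_\varepsilon\|^2_{L^2(\Omega_\varepsilon)} \ge c\,\|W_\varepsilon\|^2_{H^1(\Omega_\varepsilon)},
$$
the lateral contributions of $\kappa_i$ being nonnegative and hence discardable. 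Here the positivity of $k',k_i'$ replaces a Poincaré inequality, which is unavailable uniformly in $\varepsilon$ on a thick junction. After bounding the right-hand side as $|\langle {\cal R}_\varepsilon, W_\varepsilon\rangle_\varepsilon| \le \delta\|W_\varepsilon\|^2_{H^1(\Omega_\varepsilon)} + C(\delta)\,\Phi^2_\varepsilon(t)$ by Young's inequality, absorbing $\delta\|W_\varepsilon\|^2_{H^1}$ into the left, integrating in $t$, using $W_\varepsilon(0)=0$ and Gronwall's lemma, I obtain simultaneously $\max_{[0,T]}\|W_\varepsilon\|^2_{L^2(\Omega_\varepsilon)}$ and $\|W_\varepsilon\|^2_{L^2(0,T;H^1(\Omega_\varepsilon))}$ bounded by $C\int_0^T\Phi_\varepsilon^2\,dt$, which is exactly the square of the claimed right-hand side.

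The entire difficulty is therefore concentrated in the third step: estimating the residual quantity $\Phi_\varepsilon$. Using Green's formula on $\Omega_0$ and on each rod, ${\cal R}_\varepsilon$ splits into volume residuals, lateral Robin residuals on $\Upsilon^{(i)}_\varepsilon$, and interface residuals on $I_0,I_1,I_2$. The volume and interface residuals are $O(\varepsilon)$ by the very construction of the outer and junction-/branch-layer terms: equations \eqref{3.3}, \eqref{3.10+1}--\eqref{3.11}, the layer problems of Propositions~\ref{prop1}--\ref{prop2}, and the matching/transmission conditions \eqref{trans1'}--\eqref{third_trans4} were chosen to cancel the $O(1)$ and $O(\varepsilon^{-1})$ contributions, while the cutoff functions $\chi_0,\chi_1,\chi_2$ produce only exponentially small terms because the layer solutions decay exponentially away from the interfaces. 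The crucial technical device is the scaled trace inequality $\|\psi\|^2_{L^2(\Upsilon^{(i)}_\varepsilon)} \le C\big(\varepsilon^{-1}\|\psi\|^2_{L^2(G^{(i)}_\varepsilon)} + \varepsilon\,\|\nabla\psi\|^2_{L^2(G^{(i)}_\varepsilon)}\big)$, which converts boundary integrals into the $H^1(\Omega_\varepsilon)$-norm and compensates for the absence of a uniformly bounded extension operator.

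I expect the bookkeeping of the lateral residuals to be the main obstacle. Applied to the Robin terms, and using the bounds \eqref{est-2'} for $\kappa_i$, the factors $\varepsilon^{\alpha_i}$ and $\varepsilon^{\beta_i}$ combine with the $\varepsilon^{-1}$-growth of the total length of $\Upsilon^{(i)}_\varepsilon$ to produce precisely the powers $\varepsilon^{\alpha_i-1}$ and $\varepsilon^{\beta_i-1}$; when $\alpha_i=1$ or $\beta_i=1$ the corresponding zero-order term has already been absorbed into the homogenized operator $\boldsymbol{{\cal A}}$ and functional ${\bf F}$, so one either gains the extra factor recorded by the Kronecker shift $\delta_{\alpha_i,1}$ or is left only with $\|g^{(i)}_\varepsilon - g^{(i)}_0\|_{L^2}$ via \eqref{1.3}. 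The residual loss $\varepsilon^{1-\rho}$, rather than $\varepsilon$, arises when controlling the interior $x_1$-derivatives of the homogenized components, which are only $L^2$-regular and must be handled by Hölder's inequality with an arbitrarily small loss $\rho$. The smoothness hypotheses on $f_0$ and $g^{(i)}_0$ together with the vanishing of $g^{(i)}_0$ and $\partial_{x_2}g^{(i)}_0$ at $x_2=-\sum_{n=0}^i l_n$ are exactly what guarantee the regularity of ${\bf v}$ needed to make all these residual estimates rigorous.
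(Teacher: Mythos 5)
Your overall strategy coincides with the paper's own proof: form the residual identity (\ref{res_int-1})--(\ref{res_int-2}), test with $W_\varepsilon = R_\varepsilon - v_\varepsilon$, use the strict monotonicity coming from (\ref{cond-2}) to get coercivity (this replaces any Poincar\'e inequality; in fact no Gronwall lemma is even needed, since the zero-order monotone terms already give the full $H^1$-coercivity), and reduce the whole theorem to estimating the residual functional. The energy part of your argument is sound and matches the paper's Part~III and final step.

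The genuine gap lies in the tool you single out as ``the crucial technical device.'' A scaled trace inequality of the form $\|\psi\|^2_{L^2(\Upsilon^{(i)}_\varepsilon)} \le C\big(\varepsilon^{-1}\|\psi\|^2_{L^2(G^{(i)}_\varepsilon)} + \varepsilon\|\nabla\psi\|^2_{L^2(G^{(i)}_\varepsilon)}\big)$ produces only one-sided bounds: for the lateral Robin residuals it yields $\big|\varepsilon^{\alpha_i}\int_{\Upsilon^{(i)}_\varepsilon}\kappa_i(R_\varepsilon)\psi\,dx_2\big| \le C\varepsilon^{\alpha_i-1}\|\psi\|_{H^1(\Omega_\varepsilon)}$, which suffices only when $\alpha_i>1$. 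In the critical case $\alpha_i=1$ (and likewise $\beta_i=1$) both the boundary term and the homogenized zero-order term $2h_{i,m}^{-1}\int_{G^{(i,m)}_\varepsilon}\kappa_i(v^{(i,m)})\psi\,dx$ are $O(1)$, and the Kronecker-shifted exponent $\varepsilon^{\alpha_i-1+\delta_{\alpha_i,1}}=\varepsilon$ in (\ref{as-est}) requires showing that their \emph{difference} is $O(\varepsilon)$ --- an exact cancellation that no upper bound on the trace can supply. Your phrase ``the corresponding zero-order term has already been absorbed into the homogenized operator'' asserts precisely this cancellation without providing a mechanism for it. The paper's mechanism is the exact integral identity (\ref{identity1}),
$$
\frac{\varepsilon h_{i,m}}{2}\int\limits_{\Upsilon_{\varepsilon }^{(i,m)} } \phi
  \, dx_2=\int\limits_{G_\varepsilon ^{(i,m)}} \phi \, dx -
  \varepsilon\int\limits_{G^{(i,m)}_{\varepsilon}}Y_{i,m} \big(\tfrac{x_1}{\varepsilon}\big)\,\partial_{x_1} \phi \, dx,
$$
proved by integration by parts; applied with $\phi = \kappa_i(R_\varepsilon)\psi$ (resp.\ $\phi = g^{(i)}_\varepsilon\psi$, together with (\ref{1.2})--(\ref{1.3})), it converts the boundary integral \emph{exactly} into the homogenized volume term plus corrections of order $\varepsilon\|\psi\|_{H^1(\Omega_\varepsilon)}$ (plus $\|g^{(i)}_\varepsilon - g^{(i)}_0\|_{L^2}$), which is what produces (\ref{e-1}) and hence (\ref{est-2}). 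Without this identity, or an equivalent lemma on integrals over $\varepsilon$-periodic oscillating boundaries, your plan fails exactly in the cases that give the theorem its sharpest terms. A secondary, less consequential inaccuracy: the loss $\varepsilon^{1-\rho}$ does not come from the $x_1$-derivatives of the homogenized components being ``only $L^2$-regular'' (the hypotheses on $f_0$ and $g^{(i)}_0$ guarantee classical regularity, as the Remark after the proof notes); it comes from estimating the $O(1)$ cross-derivative terms $\partial^2_{x_1\xi_1}{\cal N}$ collected in ${\cal I}^\varepsilon_6$ --- products of exponentially decaying layer functions with $H^1$ test functions --- via Lemma 3.1 of \cite{MN96}.
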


\begin{proof} {\bf I. Residuals in the differential equations.} \
Substituting  $R_{\varepsilon}$ in the differential equations of problem (\ref{start_prob}) instead of $v_\varepsilon$ and calculating discrepancies with regard to problems (\ref{3.3}), (\ref{3.10+1}) -- (\ref{3.10}),  we get
\begin{multline}\label{rez1}
\partial_t R_{\varepsilon} - \Delta_{x}R_{\varepsilon} + k(R_{\varepsilon}) - f_0
=
k(R_{\varepsilon}) - k(v^+) + \varepsilon \chi_0(x_2) \partial_t {\cal N}_+^{(0)}(\xi,x_1,t)|_{\xi=\frac{x}{\varepsilon}}
\\
- \chi^\prime_0(x_2) \bigl( \partial_{\xi_2}{\cal N}_+^{(0)}(\xi,x_1,t) \bigr)\big|_{\xi=\frac{x}{\varepsilon}}
  - \varepsilon \partial_{x_2}\bigl( \chi^\prime_0(x_2) {\cal N}_+^{(0)}(\tfrac{x}{\varepsilon}, x_1, t) \bigr)
\\
-  \chi_0(x_2) \bigl( \partial^2_{x_1\xi_1}{\cal N}_+^{(0)}(\xi,x_1,t)
\bigr)\big|_{\xi=\frac{x}{\varepsilon}}
-   \varepsilon \chi_0(x_2)   \partial_{x_1}\big( \big( \partial_{x_1} {\cal N}_+^{(0)}(\xi,x_1,t)\bigr)\big|_{\xi=\frac{x}{\varepsilon}}
\big)
\\  \text{in} \  \Omega_0\times (0,T);
\end{multline}
\begin{multline}\label{rez2}
\partial_t R_{\varepsilon} - \Delta_{x}R_{\varepsilon} + k_0(R_{\varepsilon})
= k_0(R_{\varepsilon}) - k_0(v^{(0)}) -
2 \delta_{\alpha_0,1} h^{-1}_{0} \kappa_0\big(v^{(0)}\big) + 2 \delta_{\beta_0,1} h^{-1}_{0} g^{(0)}_0
\\
- \varepsilon \partial_{x_1}\Bigl(Y_{0}(\tfrac{x_1}{\varepsilon}) \partial^2_{x_1x_1} v^{(0)} \Bigr)
-  \varepsilon \partial_{x_2}\Bigl(Y_{0}(\tfrac{x_1}{\varepsilon}) \partial^2_{x_2 x_1}v^{(0)}\Bigr)
\\
+ \varepsilon\Bigl( Y_0(\tfrac{x_1}{\varepsilon})\, \partial^2_{t x_1}v^{(0)}(x,t) +
\chi_0(x_2) \partial_{t}{\cal N}^{(0)}_{-}\bigl(\tfrac{x}{\varepsilon}, x_1, t \bigr) + \chi_1(x_2) \partial_{t}{\cal N}^{(1)}\bigl(\tfrac{x_1}{\varepsilon}, \tfrac{x_2 + l_1}{\varepsilon}, x_1, t \bigr) \Bigr)
\\
-   \chi^\prime_0(x_2) \bigl( \partial_{\xi_2}{\cal N}^{(0)}_-(\xi, x_1, t)\bigr)\big|_{\xi=\frac{x}{\varepsilon}}
 - \varepsilon \partial_{x_2}\bigl( \chi^\prime_0(x_2) {\cal N}^{(0)}_-(\tfrac{x}{\varepsilon},x_1, t) \bigr)
\\
-   \chi_0(x_2) \bigl( \partial^2_{x_1\xi_1}{\cal N}^{(0)}_-(\xi, x_1, t)\bigr)\big|_{\xi=\frac{x}{\varepsilon}}
-  \varepsilon \chi_0(x_2)   \partial_{x_1}\big( \big(\partial_{x_1} {\cal N}^{(0)}_-(\xi, x_1, t)\bigr)\big|_{\xi=\frac{x}{\varepsilon}} \big)
\\
-   \chi^\prime_1(x_2) \bigl( \partial_{\xi_2}{\cal N}^{(1)}(\xi, x_1, t)\bigr)\big|_{\xi_1=\frac{x_1}{\varepsilon}, \, \xi_2=\frac{x_2+l_1}{\varepsilon}}
 - \varepsilon \partial_{x_2}\bigl( \chi^\prime_1(x_2) {\cal N}^{(1)}(\tfrac{x_1}{\varepsilon},\tfrac{x_2+l_1}{\varepsilon},x_1, t) \bigr)
\\
-   \chi_1(x_2) \bigl( \partial^2_{x_1\xi_1}{\cal N}^{(1)}(\xi, x_1, t)\bigr)\big|_{\xi_1=\frac{x_1}{\varepsilon}, \, \xi_2=\frac{x_2+l_1}{\varepsilon}}
-   \varepsilon \chi_1(x_2)   \partial_{x_1}\big( \big(\partial_{x_1} {\cal N}^{(1)}(\xi, x_1, t)\bigr)\big|_{\xi_1=\frac{x_1}{\varepsilon}, \, \xi_2=\frac{x_2+l_1}{\varepsilon}} \big)
\\
\text{in} \ \  G^{(0)}_\varepsilon\times (0,T);
\end{multline}
and similar relations in $G^{(1,m)}_\varepsilon\times (0,T), \ (m=1, 2)$ and  $G^{(2,m)}_\varepsilon\times (0,T), \ (m=1, 2, 3, 4)$ up to replacement of indices.

\medskip

{\bf II. Residuals in the boundary and initial conditions.} \
Obviously, $R_\varepsilon |_{t=0} =0.$
Also using (\ref{trans1'}), (\ref{trans1''}), (\ref{second_trans2}), (\ref{3.23}), (\ref{third_trans2}), (\ref{third_trans4}), (\ref{3.24}) and  (\ref{3.25}), one can verify that
\begin{gather}
\left[\partial_{x_2}R_\varepsilon \right]\big|_{Q^{(0)}_\varepsilon} = - \varepsilon Y_0(\tfrac{x_1}{\varepsilon})\, \partial^2_{x_1 x_2}v^{(0)}(x_1,0,t) ,
\notag
\\
\left[\partial_{x_2}R_\varepsilon \right]\big|_{Q^{(1,m)}_\varepsilon} =  \varepsilon
\Big(Y_0  \partial^2_{x_1 x_2}v^{(0)}(x,t) -  Y_{1,m} \partial^2_{x_1 x_2}v^{(1,m)}(x,t)\Big)|_{x_2=-l_1}, \  m=1, 2, \notag
\\
\left[\partial_{x_2}R_\varepsilon \right]\big|_{Q^{(2,m)}_\varepsilon} =  \varepsilon
\Big(Y_{1,1} \partial^2_{x_1 x_2}v^{(1,1)}(x,t)  -  Y_{2,m}
\partial^2_{x_1 x_2}v^{(2,m)}(x,t)\Big)\big|_{x_2=-l_1 -l_2}, \ m=1, 2, \notag
\\
\left[\partial_{x_2}R_\varepsilon \right]\big|_{Q^{(2,m)}_\varepsilon} =  \varepsilon
\Big(Y_{1,2} \partial^2_{x_1 x_2}v^{(1,1)}(x,t)  -  Y_{2,m} \partial^2_{x_1 x_2}v^{(2,m)}(x,t)\Big)|_{x_2=-l_1 -l_2}, \  m=3, 4, \label{jump_deriv}
\end{gather}
where $Q^{(i,m)}_{\varepsilon} = \partial G^{(i,m)}_{\varepsilon} \cap \{x_2= - \sum_{n=1}^i l_n\}.$

Since $Z^{(0)}_1$ is odd in $\xi_1$ and  $Z^{(0)}_2$ is even in $\xi_1$ (see Proposition~\ref{prop1}), it is easy to check that $\partial_{\nu}R_\varepsilon=0$ on $\partial\Omega_\varepsilon \cap \{x: x_2 \ge 0\}.$ In additional,  one can verify that
\begin{gather}
\partial_{x_2}R_\varepsilon\big|_{\partial\Omega_\varepsilon \cap \{x_2 = -l_1\}} =  \varepsilon
\, Y_0(\tfrac{x_1}{\varepsilon})\, \partial^2_{x_1 x_2}v^{(0)}(x_1, -l_1, t), \notag
\\
\partial_{x_2}R_\varepsilon\big|_{\partial\Omega_\varepsilon \cap \{x_2 = -l_1 -l_2\} \cap \partial G^{(1,m)}_\varepsilon}
=  \varepsilon\, Y_{1,m}(\tfrac{x_1}{\varepsilon})\, \partial^2_{x_1 x_2}v^{(1,m)}(x_1,-l_1- l_2,t), \ m=1, 2, \notag
\\
\partial_{x_2}R_\varepsilon|_{\partial \Omega_\varepsilon \cap \{x_2 = -l_1-l_2 -l_3\}} = 0. \label{B_1}
\end{gather}

Taking into account boundary conditions in problems (\ref{sol_Z_i}), (\ref{3.13}), (\ref{3.15'}), we find the values
of $\partial_{x_1}R_\varepsilon$ on the vertical boundary of the branches:
 \begin{multline}\label{B_2}
\partial_{x_1}R_\varepsilon = \varepsilon \Big(Y_{0}(\tfrac{x_1}{\varepsilon})\, \partial^2_{x_1 x_1}v^{(0)}(x,t)
+
\chi_0(x_2)\big(\partial_{x_1}{\cal N}^{(0)}_{-}(\xi, x_1, t) \big)\big|_{\xi=\tfrac{x}{\varepsilon}}
\\
+ \chi_1(x_2)
\big(\partial_{x_1}{\cal N}^{(1)}(\xi, x_1, t) \big)\big|_{\xi_1=\tfrac{x_1}{\varepsilon},\,  \xi_2 =\tfrac{x_2 + l_1}{\varepsilon}} \Bigr)
\quad \text{on} \ \partial G^{(0)}_\varepsilon \cap \{x_2 \in(-l_1,0)\},
\end{multline}
\begin{multline}\label{B_3}
\partial_{x_1}R_\varepsilon = \varepsilon \Big(Y_{1,m}(\tfrac{x_1}{\varepsilon})\, \partial^2_{x_1 x_1}v^{(1,m)}(x,t)
+
\chi_1(x_2)\big(\partial_{x_1}{\cal N}^{(1)}_{1,m}(\xi, x_1, t) \big)\big|_{\xi_1=\tfrac{x_1}{\varepsilon}, \, \xi_2 =\tfrac{x_2 + l_1}{\varepsilon}}
\\
+ \chi_2(x_2)
\big(\partial_{x_1}{\cal N}^{(2)}_m(\xi, x_1, t) \big)\big|_{x_1=\tfrac{x_1}{\varepsilon}, \,x_2 =\tfrac{x_2 + l_1 + l_2}{\varepsilon}} \Bigr)
\\  \text{on} \ \partial G^{(1,m)}_\varepsilon \cap \{x_2 \in(-l_1-l_2,-l_1)\}, \ \ m=1, 2,
\end{multline}
\begin{multline}\label{B_4}
\partial_{x_1}R_\varepsilon = \varepsilon \Big(Y_{2,m}(\tfrac{x_1}{\varepsilon})\, \partial^2_{x_1 x_1}v^{(2,m)}(x,t)
+
\chi_2(x_2)
\big(\partial_{x_1}{\cal N}^{(2)}_{2,m}(\xi, x_1, t) \big)\big|_{x_1=\tfrac{x_1}{\varepsilon}, \, x_2 =\tfrac{x_2 + l_1 + l_2}{\varepsilon}} \Bigr)
\\
\text{on} \ \partial G^{(2,m)}_\varepsilon \cap \{x_2 \in(-l_1-l_2-l_3,-l_1-l_2)\}, \ \
\quad m=1, 2, 3, 4.
\end{multline}

\medskip

{\bf III. Residuals in the integral identity.} \
Multiplying (\ref{rez1}) and (\ref{rez2}) for each indexes $i$ and $m$  with arbitrary function
$\psi \in L^2(0, T; H^{1}(\Omega _{\varepsilon })),$  integrating by parts and taking
(\ref{jump_deriv})--(\ref{B_4}) into account, we deduce
\begin{equation}\label{res_int-1}
\int\limits_{\Omega _{\varepsilon }}\partial _{t} R_{\varepsilon}\,\psi \, dx +
\langle {\cal A}_\varepsilon(t) R_{\varepsilon} , \psi \rangle_\varepsilon
=  \int\limits_{\Omega _{0}}f_{0}\,\psi \, dx
+ {\cal F}_\varepsilon(\psi)
\end{equation}
for a.e. $t\in (0,T].$ Subtracting the integral identity (\ref{1.4}) from (\ref{res_int-1}) and integrating over $t \in (0, \tau),$ where $\tau \in (0,T],$
we get
\begin{multline}\label{res_int-2}
\int_{0}^{\tau}\Big( \langle R'_{\varepsilon} - v'_\varepsilon, \psi   \rangle_\varepsilon
+ \langle {\cal A}_\varepsilon(t) R_{\varepsilon} - {\cal A}_\varepsilon(t) v_{\varepsilon} , \psi \rangle_\varepsilon
\Big) \, dt
\\
 = \int_{0}^{\tau} \Big({\cal F}_\varepsilon(\psi) - \sum_{i=0}^2 \varepsilon ^{\beta_i}
\int\limits_{\Upsilon_{\varepsilon }^{(i)}} g^{(i)}_{\varepsilon }\,\psi \, dx_2\Big) \, dt,
\end{multline}
where ${\cal F}_\varepsilon(\psi) = \sum_{j=1}^5 {\cal I}^\varepsilon_j(\psi)$ and (to short formulas we omit variables $\frac{x}{\varepsilon}, x, t$ in some places)
$$
{\cal I}^\varepsilon_1(\psi)= \int\limits_{\Omega _{0}} \big(k(R_\varepsilon) - k(v^+)\big) \, \psi \, dx + \sum_{i=0}^2 \sum_{m=1}^{2i} \int\limits_{G^{(i,m)}_{\varepsilon}} \big(k_i(R_\varepsilon) - k_i(v^{(i,m)})\big) \,\psi \, dx,
$$
$$
{\cal I}^\varepsilon_2(\psi) = \sum_{i=0}^2 \sum_{m=1}^{2i}\Bigg( \varepsilon^{\alpha_i} \int\limits_{\Upsilon_{\varepsilon }^{(i,m)}} \kappa_i(R_\varepsilon) \, \psi \, dx_2
 - 2 \delta_{\alpha_i,1} h^{-1}_{i,m}  \int\limits_{G^{(i,m)}_{\varepsilon}}  \kappa_i\big(v^{(i,m)}\big)
 \, \psi\, dx \Bigg),
$$
$$
{\cal I}^\varepsilon_3(\psi) = 2 \sum_{i=0}^2 \sum_{m=1}^{2i} \delta_{\beta_i,1} h^{-1}_{i,m} \int\limits_{G^{(i,m)}_{\varepsilon}} g^{(i)}_0 \, \psi\, dx,
$$
$$
{\cal I}^\varepsilon_4(\psi) = \varepsilon \Bigg( \int\limits_{\Omega_0}\Big(\chi_0(x_2) \partial_t {\cal N}_+^{(0)} \, \psi
+ \chi^\prime_0(x_2) {\cal N}_+^{(0)}\, \partial_{x_2} \psi
 + \chi_0(x_2)  \big( \partial_{x_1} {\cal N}_+^{(0)}(\xi,x_1,t)\bigr)\big|_{\xi=\frac{x}{\varepsilon}} \, \partial_{x_1} \psi \Big) \, dx
$$
$$
+ \sum_{i=0}^2 \sum_{m=1}^{2i} \int\limits_{G^{(i,m)}_{\varepsilon}} Y_{i,m}(\tfrac{x_1}{\varepsilon})\big( \partial^2_{x_2 x_1}v^{(i,m)}\, \partial_{x_2}\psi +  \partial^2_{x_1 x_1}v^{(i,m)}\, \partial_{x_1}\psi \big) \, dx
$$
$$
+ \int\limits_{G^{(0)}_{\varepsilon}}
\Bigl( Y_0(\tfrac{x_1}{\varepsilon})\, \partial^2_{t x_1}v^{(0)} +
\chi_0(x_2) \partial_{t}{\cal N}^{(0)}_{-}\bigl(\tfrac{x}{\varepsilon}, x_1, t \bigr) + \chi_1(x_2) \partial_{t}{\cal N}^{(1)}\bigl(\tfrac{x_1}{\varepsilon}, \tfrac{x_2 + l_1}{\varepsilon}, x_1, t \bigr) \Bigr) \, \psi \, dx
$$
$$
+ \int\limits_{G^{(0)}_{\varepsilon}} \Big(\chi_0(x_2)   \big(\partial_{x_1} {\cal N}^{(0)}_-(\xi, x_1, t)\bigr)\big|_{\xi=\frac{x}{\varepsilon}} + \chi_1(x_2) \big(\partial_{x_1} {\cal N}^{(1)}(\xi, x_1, t)\bigr)\big|_{\xi_1=\frac{x_1}{\varepsilon}, \, \xi_2=\frac{x_2+l_1}{\varepsilon}}
 \Big)\partial_{x_1}\psi \, dx
$$
$$
+ \int\limits_{G^{(0)}_{\varepsilon}} \Big( \chi^\prime_0(x_2) {\cal N}^{(0)}_-(\tfrac{x}{\varepsilon},x_1, t) +
\chi^\prime_1(x_2) {\cal N}^{(1)}(\tfrac{x_1}{\varepsilon},\tfrac{x_2+l_1}{\varepsilon},x_1, t) \Big)\partial_{x_2}\psi \, dx
$$
$$
+ \sum_{m=1}^2\int\limits_{G^{(1,m)}_{\varepsilon}}
\Bigl( Y_{1,m}(\tfrac{x_1}{\varepsilon})\, \partial^2_{t x_1}v^{(1,m)} +
\chi_1\, \partial_{t}{\cal N}^{(1)}_{1,m}\bigl(\tfrac{x_1}{\varepsilon}, \tfrac{x_2 + l_1}{\varepsilon}, x_1, t \bigr) + \chi_2\,
\partial_{t}{\cal N}^{(2)}_m\bigl(\tfrac{x_1}{\varepsilon}, \tfrac{x_2 + l_1+ l_2}{\varepsilon}, x_1, t \bigr) \Bigr) \, \psi \, dx
$$
$$
+ \int\limits_{G^{(1,m)}_{\varepsilon}} \Big(\chi_1 \, \big(\partial_{x_1} {\cal N}^{(1)}_{1,m}(\xi, x_1, t)\bigr)\big|_{\xi_1=\frac{x_1}{\varepsilon}, \, \xi_2=\frac{x_2+l_1}{\varepsilon}} + \chi_2\, \big(\partial_{x_1} {\cal N}^{(2)}_m(\xi, x_1, t)\bigr)\big|_{\xi_1=\frac{x_1}{\varepsilon}, \, \xi_2=\frac{x_2+l_1 + l_2}{\varepsilon}}
 \Big)\partial_{x_1}\psi \, dx
$$
$$
+ \int\limits_{G^{(1,m)}_{\varepsilon}} \Big( \chi^\prime_1(x_2) {\cal N}^{(1)}_{1,m}(\tfrac{x_1}{\varepsilon},\tfrac{x_2+l_1}{\varepsilon},x_1, t) +
\chi^\prime_2(x_2) {\cal N}^{(2)}_m(\tfrac{x_1}{\varepsilon},\tfrac{x_2+l_1+l_2}{\varepsilon},x_1, t) \Big)\partial_{x_2}\psi \, dx
$$
$$
+ \sum_{m=1}^4\int\limits_{G^{(2,m)}_{\varepsilon}}
\Bigl( Y_{2,m}(\tfrac{x_1}{\varepsilon})\, \partial^2_{t x_1}v^{(2,m)} + \chi_2(x_2) \,
\partial_{t}{\cal N}^{(2)}_{2,m}\bigl(\tfrac{x_1}{\varepsilon}, \tfrac{x_2 + l_1+ l_2}{\varepsilon}, x_1, t \bigr) \Bigr) \, \psi \, dx
$$
$$
+ \int\limits_{G^{(2,m)}_{\varepsilon}}  \chi_2(x_2) \, \big(\partial_{x_1} {\cal N}^{(2)}_{2,m}(\xi, x_1, t)\bigr)\big|_{\xi_1=\frac{x_1}{\varepsilon}, \, \xi_2=\frac{x_2+l_1 + l_2}{\varepsilon}} \, \partial_{x_1}\psi \, dx
$$
$$
+ \int\limits_{G^{(2,m)}_{\varepsilon}} \chi^\prime_2(x_2) {\cal N}^{(2)}_{2,m}(\tfrac{x_1}{\varepsilon},\tfrac{x_2+l_1+l_2}{\varepsilon},x_1, t) \, \partial_{x_2}\psi \, dx \Bigg),
$$

$$
{\cal I}^\varepsilon_5(\psi) = - \int\limits_{\Omega_0} \chi^\prime_0(x_2) \bigl( \partial_{\xi_2}{\cal N}_+^{(0)}(\xi,x_1,t) \bigr)\big|_{\xi=\frac{x}{\varepsilon}} \, \psi \, dx
$$
$$
- \int\limits_{G^{(0)}_{\varepsilon}} \Big(\chi^\prime_0(x_2) \bigl( \partial_{\xi_2}{\cal N}^{(0)}_-(\xi, x_1, t)\bigr)\big|_{\xi=\frac{x}{\varepsilon}} +  \chi^\prime_1(x_2) \bigl( \partial_{\xi_2}{\cal N}^{(1)}(\xi, x_1, t)\bigr)\big|_{\xi_1=\frac{x_1}{\varepsilon}, \, \xi_2=\frac{x_2+l_1}{\varepsilon}} \Big) \psi \, dx
$$
$$
- \sum_{m=1}^2\int\limits_{G^{(1,m)}_{\varepsilon}} \Big(\chi^\prime_1 \bigl( \partial_{\xi_2}{\cal N}^{(1)}_{1,m}(\xi, x_1, t)\bigr)\big|_{\xi_1=\frac{x_1}{\varepsilon}, \, \xi_2=\frac{x_2+l_1}{\varepsilon}} +  \chi^\prime_2 \bigl( \partial_{\xi_2}{\cal N}^{(2)}_m(\xi, x_1, t)\bigr)\big|_{\xi_1=\frac{x_1}{\varepsilon}, \, \xi_2=\frac{x_2+l_1+l_2}{\varepsilon}} \Big) \psi \, dx
$$
$$
- \sum_{m=1}^4\int\limits_{G^{(2,m)}_{\varepsilon}} \chi^\prime_2(x_2)\,  \bigl( \partial_{\xi_2}{\cal N}^{(2)}_{2,m}(\xi, x_1, t)\bigr)\big|_{\xi_1=\frac{x_1}{\varepsilon}, \, \xi_2=\frac{x_2+l_1+l_2}{\varepsilon}} \, \psi \, dx,
$$

$$
{\cal I}^\varepsilon_6(\psi) = - \int\limits_{\Omega_0} \chi_0(x_2) \bigl( \partial^2_{x_1\xi_1}{\cal N}_+^{(0)}(\xi,x_1,t)
\bigr)\big|_{\xi=\frac{x}{\varepsilon}} \psi \, dx
$$
$$
- \int\limits_{G^{(0)}_{\varepsilon}} \Big(\chi_0(x_2) \bigl( \partial^2_{x_1 \xi_1}{\cal N}^{(0)}_-(\xi, x_1, t)\bigr)\big|_{\xi=\frac{x}{\varepsilon}} +  \chi_1(x_2) \bigl( \partial^2_{x_1 \xi_1}{\cal N}^{(1)}(\xi, x_1, t)\bigr)\big|_{\xi_1=\frac{x_1}{\varepsilon}, \, \xi_2=\frac{x_2+l_1}{\varepsilon}} \Big) \psi \, dx
$$
$$
- \sum_{m=1}^2\int\limits_{G^{(1,m)}_{\varepsilon}} \Big(\chi_1 \bigl( \partial^2_{x_1 \xi_1}{\cal N}^{(1)}_{1,m}(\xi, x_1, t)\bigr)\big|_{\xi_1=\frac{x_1}{\varepsilon}, \, \xi_2=\frac{x_2+l_1}{\varepsilon}} +  \chi_2 \bigl( \partial^2_{x_1 \xi_1}{\cal N}^{(2)}_m(\xi, x_1, t)\bigr)\big|_{\xi_1=\frac{x_1}{\varepsilon}, \, \xi_2=\frac{x_2+l_1+l_2}{\varepsilon}} \Big) \psi \, dx
$$
$$
- \sum_{m=1}^4\int\limits_{G^{(2,m)}_{\varepsilon}} \chi_2(x_2)\,  \bigl( \partial^2_{x_1 \xi_1}{\cal N}^{(2)}_{2,m}(\xi, x_1, t)\bigr)\big|_{\xi_1=\frac{x_1}{\varepsilon}, \, \xi_2=\frac{x_2+l_1+l_2}{\varepsilon}} \, \psi \, dx.
$$

Let us estimate the right-hand side in (\ref{res_int-2}).  Due to the conditions  (\ref{cond-2}) we have $|{\cal I}^\varepsilon_1(\psi)| \le C_1 \varepsilon \|\psi\|_{L^2(\Omega_\varepsilon)}.$ To estimate $|{\cal I}^\varepsilon_2(\psi)|,$ we use special integral identities
\begin{equation}\label{identity1}
  \frac{\varepsilon h_{i,m}}{2}\int\limits_{\Upsilon_{\varepsilon }^{(i,m)} } \phi
  \: dx_2=\int\limits_{G_\varepsilon ^{(i,m)}} \phi \: dx -
  \varepsilon\int\limits_{G^{(i,m)}_{\varepsilon}}Y_{i,m} \left(\frac{x_1}{\varepsilon
}\right)\partial_{x_1} \phi \: dx \quad \forall\: \phi\in
H^1\bigl(G^{(i,m)}_\varepsilon \bigr),
\end{equation}
for $i\in \{0,1,2\}, \ m =\overline{1, 2i}.$ To prove (\ref{identity1}) it is enough to integrate by parts the last integral in (\ref{identity1}).
If $\alpha_i=1,$ then with the help of (\ref{identity1}) we deduce
\begin{multline}\label{e-1}
\Bigg| \varepsilon^{1} \int\limits_{\Upsilon_{\varepsilon }^{(i,m)}} \kappa_i(R_\varepsilon) \, \psi \, dx_2
 - 2  h^{-1}_{i,m}  \int\limits_{G^{(i,m)}_{\varepsilon}}  \kappa_i\big(v^{(i,m)}\big)
 \, \psi\, dx \Bigg|
 \\
 \le 2  h^{-1}_{i,m}  \int\limits_{G^{(i,m)}_{\varepsilon}}  \Big| \kappa_i(R_\varepsilon) - \kappa_i\big(v^{(i,m)}\big)\Big|
  \big| \psi \big| \, dx
   \, + \,  \varepsilon \int\limits_{G^{(i,m)}_{\varepsilon}}\Big| Y_{i,m} \left(\frac{x_1}{\varepsilon}\right)\Big|
   \big|\partial_{x_1}\big(\kappa_i(R_\varepsilon)  \psi \big) \big| \, dx
\\
 \le C_2 \varepsilon \|\psi\|_{H^1(\Omega_\varepsilon)}.
\end{multline}
In the last inequality we use (\ref{cond-2}), (\ref{est-2}) and inequality $\max_{\Bbb R}|Y_{i,m}| \le 1.$
If $\alpha_i >1,$ then again with the help of (\ref{identity1}) we get
\begin{equation*}
\Bigg| \varepsilon^{\alpha_i} \int_{\Upsilon_{\varepsilon }^{(i,m)}} \kappa_i(R_\varepsilon) \, \psi \, dx_2\Bigg|
 \le C_3 \varepsilon^{\alpha_i -1} \|\psi\|_{H^1(\Omega_\varepsilon)}.
\end{equation*}
Therefore, $|{\cal I}^\varepsilon_2(\psi)| \le C_4 \sum_{i=0}^2\varepsilon^{\alpha_i -1 + \delta_{\alpha_i, 1}} \|\psi\|_{H^1(\Omega_\varepsilon)}.$

Similar, but now using (\ref{1.2}) and (\ref{1.3}), we can estimate
\begin{multline*}
\Bigg| {\cal I}^\varepsilon_3(\psi) -  \sum_{i=0}^2 \varepsilon ^{\beta_i}
\int\limits_{\Upsilon_{\varepsilon }^{(i)}} g^{(i)}_{\varepsilon }\,\psi \, dx_2\Bigg|
\\
\le C_5 \|\psi\|_{H^1(\Omega_\varepsilon)}
\sum_{i=0}^2 \Big( (1- \delta_{\beta_i, 1}) \varepsilon^{\beta_i - 1} + \delta_{\beta_i, 1}\big(\|g^{(i)}_{\varepsilon } - g^{(i)}_{0}\|_{L^2(G^{(i,m)}_{\varepsilon}) } + \varepsilon\big)\Big).
\end{multline*}

It is easy to see that ${\cal I}^\varepsilon_4(\psi)$ is of order ${\cal O}(\varepsilon).$ Thanks to the asymptotic estimates
(\ref{w2}),  (\ref{w3}),  (\ref{3.14}) -- (\ref{3.16}), all integrals in ${\cal I}^\varepsilon_5(\psi)$  are integrated
 over the support of the functions $\{\chi'_i\}_{i=0}^2.$ Therefore, they are exponentially small.

Since the functions $\partial^2_{x_1\xi_1}{\cal N}_+^{(0)},$  $\partial^2_{x_1\xi_1}{\cal N}_-^{(0)},$ $\partial^2_{x_1\xi_1}{\cal N}^{(1)},$
$\{\partial^2_{x_1\xi_1}{\cal N}_{1,m}^{(1)}, \partial^2_{x_1\xi_1}{\cal N}^{(2)}_m\}_{m=1}^{2},$
$\{\partial^2_{x_1\xi_1}{\cal N}_{2,m}^{(2)}\}_{m=1}^{4}$ exponentially decrease as $|\xi_2| \to + \infty$ (see
(\ref{w2}),  (\ref{w3}),  (\ref{3.14}) -- (\ref{3.16})), we deduce from Lemma 3.1 (\cite{MN96}) that for any $\rho \in (0,1)$
the integrals in ${\cal I}^\varepsilon_6(\psi)$ are of order ${\cal O}(\varepsilon^{1-\rho}).$

Regarding to the inequalities obtained above in this subsection, we conclude that for the right-hand side in (\ref{res_int-2})
for every $\tau \in (0,T]$ the following inequality holds
\begin{multline}\label{est-2}
  \Bigg| \int\limits_{0}^{\tau} \Big({\cal F}_\varepsilon(\psi) - \sum_{i=0}^2 \varepsilon ^{\beta_i}
\int\limits_{\Upsilon_{\varepsilon }^{(i)}} g^{(i)}_{\varepsilon }\,\psi \, dx_2\Big) \, dt \Bigg|
\le
C \|\psi\|_{L^2(0,T; H^1(\Omega_\varepsilon))} \,
\\
\times \Big(\varepsilon^{1-\rho}  +  \sum_{i=0}^2\big(\varepsilon^{\alpha_i -1 + \delta_{\alpha_i, 1}} +
(1- \delta_{\beta_i, 1}) \varepsilon^{\beta_i - 1} + \delta_{\beta_i, 1}\|g^{(i)}_{\varepsilon } - g^{(i)}_{0}\|_{L^2(G^{(i)}_{\varepsilon})}
\big)\Big).
\end{multline}

Putting $R_\varepsilon - v_\varepsilon$ instead $\psi$  in (\ref{res_int-1}) and taking into account that ${\cal A}_\varepsilon$ is strictly monotone,
we derive from  (\ref{res_int-1}) and (\ref{est-2}) the estimate (\ref{as-est}).
\end{proof}

\begin{remark}
The constant $C_0$  in  (\ref{as-est}) depends on the following quantities:
$$
\sup_{(x_1,t)\in (0, a)\times(0,T)}\,  \bigl|\partial^2_{t x_j}v^+(x_1,0, t)\bigr|,
\quad
\sup_{(x_1,t)\in (0,a)\times(0,T)}\,  \bigl| \mathcal{D}^{\alpha}v^+(x_1,0,t) \bigr|,
$$
$$
\sup_{(x_1,t)\in (0, a)\times(0,T)}\,  \bigl|\partial^2_{t x_j}v^{(0)}(x_1,-l_1, t)\bigr|,
\quad
\sup_{(x_1,t)\in (0,a)\times(0,T)}\,  \bigl| \mathcal{D}^{\alpha}v^{(0)}(x_1,-l_1,t)\bigr|,
$$
$$
\sup_{(x_1,t)\in (0, a)\times(0,T)}\,  \bigl|\partial^2_{t x_j}v^{(1,m)}(x_1,- l_1 - l_2, t)\bigr|,
\
\sup_{(x_1,t)\in (0,a)\times(0,T)}\,  \bigl| \mathcal{D}^{\alpha} v^{(1,m)}(x_1,- l_1 - l_2, t) \bigr|,
$$
$m=1, 2, \ j=1, 2,$
and $\|\partial^2_{t x_1}v^{(i,m)}_0\|_{L^2(D_i\times(0,T))},$ where $i\in \{0, 1, 2\},$ $m=\overline{1, 2i},$
$ \  |\alpha|=\alpha_1+\alpha_2 \le 2.$
\ Due to the assumptions for the functions $f_0$ and $\{g^{(i)}_0\}_{i=0}^2$ and
condition (\ref{cond-2}) it follows from classical results on the smoothness of solutions to
semilinear parabolic problems (see for instance \S 6 and \S 7 from \cite[Sec. V]{Lad}) that these quantities  are bounded.
\end{remark}

From Theorem~\ref{th_1} it follows directly the Corollary~\ref{corollary}.

\end{document}